\newcommand{\ad}{\operatorname{ad}}
\newcommand{\aqch}[4]{\operatorname{D}^{#1}(#3/#2;#4)}
\newcommand{\aqh}[4]{\operatorname{D}_{#1}(#3/#2;#4)}
\newcommand{\atiyah}[3][{}]{\operatorname{At}^{#1}_{#2}(#3)}
\newcommand{\atiyahclass}[2][{}]{\operatorname{At}_{#1}^{#2}}
\newcommand{\cat}[1]{{\mathsf{#1}}}
\newcommand{\cone}{\operatorname{cone}}
\newcommand{\coker}{\operatorname{coker}}
\newcommand{\cotan}[1]{\operatorname{L}(#1)}
\newcommand{\cotam}[2]{\operatorname{C}_{#1}(#2)}
\newcommand{\dcat}[1]{\cat{D}(#1)}
\newcommand{\dsing}[1]{\cat{D}_{\mathrm{sg}}(#1)}
\newcommand{\dual}[1]{{#1}^{\scriptscriptstyle{\vee}}}
\newcommand{\env}[2]{{#2}_{#1}^{\operatorname{e}}}
\newcommand{\Ext}[4][{}]{\operatorname{Ext}^{#1}_{#2}(#3,#4)}
\newcommand{\fdim}{\operatorname{flat\,dim}}
\newcommand{\pdim}{\operatorname{proj\,dim}}
\newcommand{\hcm}[4][{*}]{\operatorname{HH}^{#1}(#3/#2;#4)}
\newcommand{\hh}{\operatorname{H}}
\newcommand{\HH}[2]{\operatorname{H}_{#1}(#2)}
\newcommand{\Hom}[3]{\operatorname{Hom}_{#1}(#2,#3)}
\newcommand{\Ker}{\operatorname{Ker}}
\newcommand{\rank}{\operatorname{rank}}
\newcommand{\RHom}[3]{\operatorname{RHom}_{#1}(#2,#3)}
\newcommand{\lotimes}[1]{\otimes^{\operatorname{L}}_{#1}}
\newcommand{\spec}{\operatorname{Spec}}
\newcommand{\susp}{{\mathsf{\Sigma}}}
\newcommand{\Tor}[4][{}]{\operatorname{Tor}_{#1}^{#2}(#3,#4)}
\newcommand{\ve}{\varepsilon}
\newcommand{\vf}{{\varphi}}
\newcommand{\wh}{\widehat}
\newcommand{\wt}{\widetilde}
\newcommand{\eps}{\varepsilon}
\newcommand{\lra}{\longrightarrow}
\newcommand{\xra}{\xrightarrow}
\newcommand{\ges}{\geqslant}
\newcommand{\fm}{{\mathfrak m}}
\newcommand{\fp}{{\mathfrak p}}
\newtheorem{theorem}[subsection]{Theorem}
\newtheorem{proposition}[subsection]{Proposition}
\newtheorem{lemma}[subsection]{Lemma}
\newtheorem{corollary}[subsection]{Corollary}
\newtheorem{theorema}{Theorem}
\theoremstyle{definition}
\newtheorem{chunk}[subsection]{}
\theoremstyle{remark}
\newtheorem*{ack}{Acknowledgements}
\numberwithin{equation}{subsection}
\begin{document}

\title[Cotangent complex]{Rigidity properties of the cotangent complex}

\author[B.~Briggs]{Benjamin Briggs}
\address{Department of Mathematics,
University of Utah, Salt Lake City, UT 84112, U.S.A.}
\email{briggs@math.utah.edu}

\author[S.~B.~Iyengar]{Srikanth B.~Iyengar}
\address{Department of Mathematics,
University of Utah, Salt Lake City, UT 84112, U.S.A.}
\email{iyengar@math.utah.edu}

\thanks{Partly supported by NSF grant DMS-2001368 (SBI)}

\date{23 January 2022}

\keywords{Andr\'e-Quillen homology, cotangent complex, locally complete intersection morphism, universal Atiyah class}
\subjclass[2020]{13D03 (primary); 13B10, 14A15,  14A30 (secondary)}

\begin{abstract} 
This work concerns maps $\varphi \colon R\to S$ of commutative noetherian rings, locally of finite flat dimension.  It is proved that the Andr\'e-Quillen homology functors are rigid, namely, if  $\mathrm{D}_n(S/R;-)=0$ for some $n\ge 2$, then $\mathrm{D}_n(S/R;-)=0$ for all $n\ge 2$ and $\vf$ is locally complete intersection. This extends  Avramov's theorem that draws the same conclusion assuming $\mathrm{D}_n(S/R;-)$ vanishes for all $n\gg 0$, confirming a conjecture of Quillen. The rigidity of Andr\'e-Quillen functors is deduced from a more general result about the higher cotangent modules which answers a question raised by Avramov and Herzog, and subsumes a conjecture of Vasconcelos that was proved recently by the first author.  The new insight leading to these results concerns the equivariance of a map from Andr\'e-Quillen cohomology to Hochschild cohomology defined using the universal Atiyah class of $\varphi$.
\end{abstract}

\maketitle

\section*{Introduction}
\label{sec:intro}

In this work we discover new rigidity properties of the cotangent complex associated to a map of commutative noetherian rings, or of locally noetherian schemes. The phenomena we consider are all local, so we focus in the introduction on a map of commutative noetherian rings $\vf\colon R\to S$; its cotangent complex is denoted $\cotan{\vf}$. One  rigidity property can be stated using the Andr\'e-Quillen homology functors $\aqh nRS- =\HH n{\cotan{\vf}\lotimes S-}$ on the category of $S$-modules.

\begin{theorema}
\label{intro:thma}
Let $\vf\colon R\to S$ be a map of commutative noetherian rings, that is locally of finite flat dimension. If $\aqh nRS-=0$ for some $n\ge 1$, then $\vf$ is locally complete intersection.
\end{theorema}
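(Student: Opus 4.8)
Both the conclusion and the hypothesis are local on $S$, so the plan is first to assume $\vf\colon (R,\fm)\to(S,\fn)$ is a local homomorphism of finite flat dimension, with residue field $k$. I would then choose a Cohen factorization $R\to R'\to S$ in which $R\to R'$ is flat with regular closed fibre and $R'\to S$ is a surjection with kernel $I$; finite flat dimension gives $\pdim_{R'}S<\infty$. Since $R\to R'$ is formally smooth, $\aqh i R{R'}-=0$ for $i\ges1$, so the Jacobi--Zariski exact sequence yields $\aqh nRS-\cong\aqh n{R'}S-$ for every $n\ges2$; the case $n=1$ is elementary, as $\aqh 1{R'}S-=0$ forces $I/I^2=0$ and hence, by Nakayama, $\vf$ formally smooth. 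Thus I may assume $n\ges2$ and that $\vf$ is the surjection $R'\to S=R'/I$, the goal being that $I$ be generated by an $R'$-regular sequence. It is also convenient to record that $\vf$ is complete intersection exactly when $\ve_i(\vf)=0$ for all $i\ges3$, and that $\aqh n{R'}Sk=0$ is equivalent to $\ve_{n+1}(\vf)=0$.

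\textbf{Step 2: The equivariant Atiyah-class comparison.} The crux is the comparison map built from the universal Atiyah class $\atiyahclass[\vf]{}$. Using it I would produce a natural transformation
\[
\operatorname{D}^{*}(S/R;M)\ \longrightarrow\ \hcm[*]{R}{S}{M}
\]
from André--Quillen cohomology to Hochschild cohomology. The new observation I would establish is that this transformation is equivariant: both sides are graded modules over the Hochschild cohomology algebra $\hcm[*]{R}{S}{S}$, the action on the source being the characteristic action defined through $\atiyahclass[\vf]{}$, and the map respects these actions. I would verify this at the level of $\cotan{\vf}$ and the diagonal $\env R S$-module $S$, so that a single distinguished operator --- the class in $\hcm[2]{R}{S}{S}$ extracted from $\atiyahclass[\vf]{}$ --- controls the passage between cohomological degrees on both sides simultaneously.

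\textbf{Step 3: Propagate the vanishing and conclude.} Dualizing, the single hypothesis $\aqh nRS-=0$ is equivalent to the vanishing of $\operatorname{D}^{n}(S/R;-)$, that is, of the degree-$n$ part of the source of the comparison map. The plan is then to feed this into the equivariant module structure: because $\vf$ has finite flat dimension, the complex $\RHom{\env R S}{S}{S}$ is well behaved, and I expect the characteristic operator to act on $\hcm[*]{R}{S}{-}$ with enough surjectivity (or eventual periodicity) that the single vanishing forces $\aqh mRS-=0$ for all $m\gg0$. Avramov's theorem confirming Quillen's conjecture then yields that $\vf$ is locally complete intersection. \emph{The main obstacle is precisely this propagation step:} turning the equivariance of Step 2, together with the finiteness coming from finite flat dimension, into the statement that vanishing in one degree spreads to all large degrees. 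This is where the genuinely new content lies --- controlling how the characteristic action of $\hcm[*]{R}{S}{S}$ moves through the degrees of $\operatorname{D}^{*}(S/R;-)$ and ruling out an isolated gap unless the homotopy Lie algebra $\pi^{\ges2}(\vf)$ is finite-dimensional, i.e.\ unless $\vf$ is complete intersection.
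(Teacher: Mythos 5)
You have the right skeleton --- reduce to a surjective local map, compare Andr\'e--Quillen cohomology with Hochschild cohomology via the universal Atiyah class, and bring in the homotopy Lie algebra --- but the two places where the real content should sit are, respectively, misstated and missing. In Step 2 the equivariance you formulate (module actions of $\hcm RSS$ on both sides) is not the statement that powers the argument. What is needed, and what the paper proves in Theorem~\ref{th:atiyah-equivariance}, is an equivariance with coefficients in the residue field $\ell$: there $\hcm RS{\ell}$ is a Hopf algebra (Theorem~\ref{th:hh=hopf}), and the Atiyah character $\atiyah{\vf}{\ell}\colon \Ext S{\cotan{\vf}}{\ell}\to\hcm RS{\ell}$ intertwines the characteristic action on the source with the \emph{adjoint} action on the target, the action built from the coproduct and the antipode. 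This is decisive because on primitive elements the adjoint action is the graded commutator, and the homotopy Lie algebra $\pi(\vf)$ sits inside $\hcm RS{\ell}$ as primitives; a purely module-theoretic equivariance never sees this Lie structure. (Two smaller issues in Step 1: regular factorisations exist for $R\to\wh S$, so transferring the hypothesis across completion requires \cite[1.1, 1.7]{Avramov:1999} rather than Jacobi--Zariski alone; and the asserted equivalence of $\aqh n{R'}Sk=0$ with the vanishing of a deviation is not available outside characteristic zero, since the comparison between the dg and the simplicial cotangent complexes fails to be a quasi-isomorphism in general, cf.\ \ref{ch:dgcomparison}.)

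The genuine gap is Step 3, as you acknowledge: you have no mechanism for propagating the vanishing from one degree to all large degrees, and without it your plan reduces to quoting the theorem of Avramov \cite{Avramov:1999} that this paper is designed to bypass and strengthen. There is no reason for the characteristic action to be eventually surjective, and eventual periodicity of Hochschild cohomology is a feature of complete intersections, so assuming it is circular. The paper's point is that no propagation is needed. From the functorial hypothesis $\aqh nRS-=0$ (vanishing on \emph{all} modules, which your dualisation discards) a purity argument shows that the cotangent module $\cotam n{\vf}$ is flat, hence of finite projective dimension (see \ref{ch:twoflat}); this factors $\cotan{\vf}\to\susp I/\fm I$ through a perfect complex (Theorem~\ref{th:cotam}); inside the proof of Theorem~\ref{th:main-local} that factorisation yields $\hcm[\ges N]RS{\ell}\cdot\Ext[1]S{\cotan{\vf}}{\ell}=0$ for some $N$; Proposition~\ref{pr:main-local} then converts this, via the equivariance and the degree-one bijectivity of the Atiyah character (Lemma~\ref{le:at2}), into $\ad(x)(\alpha)=0$ for all $x\in\hcm[\ges N]RS{\ell}$ and $\alpha\in\hcm[2]RS{\ell}$. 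Finally, for primitives $\alpha\in\pi^{2}(\vf)$ and $\beta\in\pi^{\ges N}(\vf)$ one computes $\ad(\alpha)(\beta)=[\alpha,\beta]=-[\beta,\alpha]=-\ad(\beta)(\alpha)=0$, so each $\ad(\alpha)$ is nilpotent on $\pi(\vf)$ for degree reasons, and the Avramov--Halperin theorem \cite[Theorem~C]{Avramov/Halperin:1987} gives that $\vf$ is complete intersection. In short, the single vanishing is converted into nilpotency of the adjoint action of $\pi^{2}(\vf)$, not into vanishing of Andr\'e--Quillen cohomology in high degrees; that conversion --- exactly the step your proposal leaves open --- is what the Hopf-equivariance of the Atiyah character provides.
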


Hitherto this result was known if $R$ contains $\mathbb Q$ as a subring, and  also when $\aqh nRS-=0$ for \emph{all} $n\gg 0$. Both results are due to Avramov~\cite{Avramov:1999}; for the former see also Halperin~\cite{Halperin:1987}. The hypothesis of the latter result is equivalent to the finiteness of the flat dimension of $\cotan{\vf}$, and in this form it settled a longstanding conjecture of Quillen~\cite{Quillen:1968}. In low degrees the vanishing of individual $\aqh nRS-$ has classically been used to characterise formally \'etale, formally smooth, and locally complete intersection maps, and each of these conditions imply that $\aqh nRS-=0$ for all $n\ge 2$; this was proved Andr\'e~\cite{Andre:1974} and Quillen~\cite{Quillen:1968} when $\vf$ is essentially of finite type, and in general by Avramov~\cite{Avramov:1999}.

The cotangent complex is well-known to be significantly more complicated  in positive and mixed characteristic than it is in characteristic zero. Avramov's proof of Quillen's conjecture used sophisticated positive characteristic methods to connect simplicial and differential graded invariants. No other proofs or simplifications of Avramov’s argument have appeared before this work. Our proof of Theorem \ref{intro:thma} is independent of \cites{Avramov:1999,Halperin:1987}, is significantly shorter, and pays no regard to the characteristic of the rings involved. Our arguments highlight a new feature controlling the behaviour of Andr\'e-Quillen cohomology, namely its torsion with respect to the action of Hochschild cohomology; see Theorem \ref{intro:thmc}.

We deduce Theorem \ref{intro:thma}  from a more general result on the \emph{higher cotangent modules} $\cotam n{\vf}$ introduced in \cite{Avramov/Herzog:1994}. Assuming that $\cotan{\vf}$ is represented by a bounded below complex of projectives, these are defined to be the $S$-modules
\[
\cotam n{\vf}\colonequals \HH n{\cotan{\vf}_{\ges n}} \qquad\text{for $n\ge 0$.}
\]
Each $\cotam n{\vf}$ is finitely generated when $\vf$ is essentially of finite type---that is to say, $S$ is a localisation of a finitely generated $R$-algebra. The cotangent complex is well-defined up to quasi-isomorphism, and the higher cotangent modules are correspondingly well-defined objects in the singularity category of $S$ in sense of Buchweitz and Orlov; see Section \ref{ch:geometry}.

The $\cotam n{\vf}$ can be thought of as higher analogues of the module $\cotam 0{\vf}= \Omega_{S/R}^1$ of K\"ahler differentials, and therefore our main result is a higher analogue of the Jacobian criterion:

\begin{theorema}
\label{intro:thmb}
Let $\vf\colon R\to S$ be a map of commutative noetherian rings, essentially of finite type and locally of finite flat dimension. If for some integer  $n\ge 1$ the $S$-module $\cotam n{\vf}$  has finite flat dimension, then $\vf$ is locally complete intersection.
\end{theorema}

This answers a question posed by Avramov and Herzog~\cite{Avramov/Herzog:1994}. It also subsumes Theorem \ref{intro:thma}, since if $\aqh nRS-=0$ then both $\cotam n{\vf}$ and  $\cotam {n-1}{\vf}$ are flat; this is explained in Section~\ref{se:cotangent-modules}. 

The missing case $n=0$ is related to a still-open conjecture of Eisenbud and Mazur \cite{Eisenbud/Mazur:1997}; see \cite{Briggs:2020a}*{3.4} for discussion. The case $n=1$ is a recent result of the first author~\cite{Briggs:2020a}, confirming a conjecture of Vasconcelos on the conormal module of a surjective homomorphism. Our arguments share an ingredient with \cite{Briggs:2020a}, but are for the most part completely different.  

The fundamental new input that makes our proof possible is a morphism
\[
\atiyahclass \vf \colon S \lra \susp \cotan{\vf}
\]
in the derived category of the enveloping algebra of $S$ over $R$.  This map was introduced by Illusie in \cite{Illusie:1971}, and called the  \emph{universal Atiyah class} of $\vf$ by Buchweitz and Flenner~ \cite{Buchweitz/Flenner:2008}. It has played an important role in applications of the cotangent complex to deformation theory, among other things. As far as we are aware, it has not been used before to study rigidity properties of the cotangent complex.

Here we focus on a new aspect of the universal Atiyah class: For any $S$-module $\ell$,  Yoneda composition with $\atiyahclass{\vf}$ yields a degree one map 
\[
\atiyah{\vf}{\ell}\colon \aqch {*}RS{\ell} \lra \hcm[*+1]RS{\ell}\,,
\]
from the Andr\'e-Quillen cohomology with coefficients in $\ell$, to the Hochschild cohomology with coefficients in $\ell$ viewed as a symmetric bimodule. 

Taking $\ell$ to be a residue field of $S$, it turns out that $\hcm RS{\ell}$ is a Hopf algebra over $\ell$.  The crucial result in this context is that the $\atiyah{\vf}{\ell}$ is equivariant with respect to the characteristic action of $\hcm  RS{\ell}$ on Andr\'e-Quillen cohomology, and the adjoint action of $\hcm RS{\ell}$ on itself, which arises from its Hopf algebra structure. For this reason we call $\atiyah{\vf}{\ell}$ the \emph{Atiyah character} of $\vf$. Seen through the looking glass~\cite{Avramov/Halperin:1983} it is analogous to the Hurewicz map from the rational homotopy of a loop space to its rational homology.

The equivariance statement is Theorem~\ref{th:atiyah-equivariance}. It is deduced from Theorem~\ref{th:equivariance}, which is a general statement about the interplay between symmetric bimodules and all bimodules.  Combining the equivariance theorem with an earlier result  of Avramov and Halperin~\cite{Avramov/Halperin:1987}  on the structure of $\hcm RS{\ell}$ leads to the next result concerning torsion in Andr\'e-Quillen cohomology with respect to the characteristic action of Hochschild cohomolgy.

\begin{theorema}
\label{intro:thmc}
Let $\vf\colon R\to S$ be a surjective local map of finite projective dimension, and $\ell$ the residue field of $S$. If $\hcm[\ges n]RS\ell\cdot  \aqch 1RS{\ell}=0$ for some integer $n$, then $\vf$ is complete intersection.
\end{theorema}

This exploits the fact that $\hcm RS{\ell}$ is the universal envelope of the homotopy Lie algebra of $\vf$, introduced by Avramov~\cite{Avramov:1984a}. Typically, the $\ell$-algebra $\hcm RS{\ell}$ is  not finitely generated, much less commutative, and it is  hard to verify that elements in $\aqch 1RS{\ell}$ are torsion. One scenario where it is easy to do so is when the natural  augmentation $\ve\colon \cotan{\vf}\to \susp  \aqch 1RS{\ell}$ factors through a perfect complex of $S$-modules. With this observation, Theorem \ref{intro:thmb} is a simple corollary of Theorem \ref{intro:thmc}; this is explained in Section \ref{se:cotangent-modules}. The condition that $\ve$ factors through a perfect complex is tantamount to the condition that it induces the zero map in the singularity category of $S$. This perspective leads to extensions of Theorems \ref{intro:thma} and \ref{intro:thmb} to locally noetherian schemes, recorded in \ref{ch:geometry}.

\begin{ack}
We  acknowledge with pleasure our  huge intellectual debt to Avramov and Halperin, and to Buchweitz.  The debt to Avramov is not only for his writings: The second author has had the benefit of innumerable conversations with him, over a period of more than twenty years,  on the mathematics surrounding the cotangent complex. And it was  Buchweitz who stressed the importance of the universal Atiyah class and its potential in commutative algebra. We thank Janina Letz, Linquan Ma, and two anonymous referees for comments and suggestions on an earlier version of this manuscript.
\end{ack}
 
\section{Graded Hopf algebras}
\label{se:Hopf}
In this section we collect some basic notions concerning graded Hopf algebras over fields.  Everything we need is already in the paper of Milnor and Moore~\cite{Milnor/Moore:1965}; see also a pre-publication of the same, reprinted in \cite{Milnor:2010}*{pp.~7}. Throughout this section we work over a field $\ell$ and  in the category of graded $\ell$-vector spaces; in particular tensor products and the module of homomorphisms are taken in this category.

\begin{chunk}
\label{ch:Hopf-algebras}
Let $(A,\mu,\eta,\Delta,\ve)$ be a graded Hopf algebra over $\ell$, with product $\mu$, unit $\eta$,  coproduct $\Delta$, and counit $\ve$. We only consider positively graded  (either upper or lower) and connected algebras: $A^0=\ell$. Every such Hopf algebra has an \emph{antipode} $\sigma\colon A\to A$, namely, an inverse to the identity map on $A$ for the convolution product on $\Hom{\ell}AA$; see \cite{Milnor/Moore:1965}*{\S8}, where this is called the ``conjugation". Being an inverse, the antipode is uniquely defined by that property.
\end{chunk}

Under suitable finiteness hypotheses the dual of a Hopf algebra is also a Hopf algebra. This is explained below.

\begin{chunk}
\label{ch:hopf-dual}
Let $(A,\mu,\eta,\Delta,\ve)$ be a graded Hopf algebra over $\ell$ of \emph{finite type}: the $\ell$-vector space $A_i$ is finite dimensional for each $i$.  Set $\dual A\colonequals \Hom{\ell}A{\ell}$,  the dual graded vector space. The finiteness hypothesis implies that the natural map 
\begin{align*}
\delta\colon & \dual A \otimes_{\ell} \dual A \lra \Hom{\ell}{A\otimes_{\ell}A}{\ell} \quad \text{where}\\
	& \delta(\alpha \otimes \beta)(a\otimes b) = (-1)^{|a||\beta|}\alpha(a)\beta(b)\,,
\end{align*}
is an isomorphism. It follows that $\dual A$ has a structure of a Hopf algebra over $\ell$, with product and coproduct defined by the compositions
\begin{align*}
&\dual A \otimes_{\ell} \dual A \xra{\ \delta\ } \Hom{\ell}{A\otimes_{\ell}A}{\ell} \xra{\ \dual\Delta\ } \Hom{\ell}A{\ell} = \dual A \\
&\dual A = \Hom{\ell}A{\ell} \xra{\ \dual{\mu}\ } \Hom{\ell}{A\otimes_{\ell}A}{\ell} \xra{\ \delta^{-1}\ } \dual A\otimes_{\ell} \dual A\,.
\end{align*}
If $\sigma$ is the  antipode on $A$ then  $\dual \sigma$ is the antipode on $\dual A$. 

We use the following observation: Fix $\alpha\in \dual A$ and say $(\delta^{-1}\dual{\mu})(\alpha) = \sum \alpha_{1}\otimes \alpha_{2}$ in $\dual A\otimes_{\ell}\dual A$. Then for any $a,b$ in $A$  one has
\begin{align}
\label{eq:dual-coproduct}
\begin{aligned}
\alpha(ab) 
	&= \dual{\mu}(\alpha)(a\otimes b) \\
	&= \sum \delta(\alpha_{1}\otimes \alpha_{2})(a\otimes b) \\ 
	&= \sum (-1)^{|a||\alpha_2|}\alpha_{1}(a) \alpha_{2}(b)  \,.
\end{aligned}
\end{align}
The reader may find reassurance in the last lines of \cite{Serre:1993}*{pp.~39}. 
\end{chunk}

\begin{chunk}
\label{ch:primitive}
An element $a\in A_{\ges 1}$ in a Hopf algebra $A$ is \emph{primitive} if its coproduct satisfies
\[
\Delta(a) = a \otimes 1 + 1 \otimes a\,.
\]
For such an element $a$ it follows from the definition of the antipode that $\sigma(a) = -a$. 
\end{chunk}

When $A$ is a Hopf algebra, any $A$-bimodule action can be intertwined into a single left action described below.

\begin{chunk}
\label{ch:classical-adjoint}
Let $A$ be a Hopf algebra over $\ell$ and $W$ an $A$-bimodule.  The \emph{adjoint action} of $A$ on $W$ is the (left) $A$-module structure on $W$ defined as follows: For any $a\in A$ and $w\in W$ one has
\[
\ad(a)(w)= \sum (-1)^{|a_{2}||w|} a_{1}  w \ \sigma(a_{2}) \quad\text{where $\Delta(a)=\sum a_{1}\otimes a_{2}$.}
\]
In these terms the condition that $\sigma$ is the antipode is precisely the condition that it is an $\ell$-linear map such  that $\ad(a)=0$ on $A^0$ whenever $|a|\ne 0$; that is to say, $A^0$ is in the socle of $A$ for the adjoint action. 

When $a$ is primitive, the adjoint action has a simple description:
\[
\ad(a)(w) = [a,w]\colonequals a w - (-1)^{|a||w|} wa\,.
\]
This is the (graded) commutator of $a$ and $w$. 
\end{chunk}

\section{Cohomology of commutative dg algebras}
\label{se:supplemented}
This section too is mostly a recollection of known results, now about the existence of Hopf algebra structures on the homology and cohomology of  algebras. Here we take as basic references the books of Avramov~\cite{Avramov:1998}, Felix, Halperin, and Thomas~\cite{Felix/Halperin/Thomas:2001}, and  Gulliksen and Levin~\cite{Gulliksen/Levin:1969}. In what follows we have to use the derived enveloping algebra. For concreteness, we use commutative dg algebras to model this, and other derived rings, but one could as well use simplicial models.

\begin{chunk}
\label{ch:dga}
By a differential graded (abbreviated to ``dg") algebra  we shall mean a non-negatively graded complex $F=\{F_i\}_{i\ges 0}$ over some commutative ground ring ($\mathbb{Z}$ is always a choice), equipped with a graded-commutative multiplication, bilinear with respect to the ground ring, so that the differential satisfies the Leibniz rule. In our applications the dg algebras that arise will  be strictly graded-commutative, and with divided powers, but these structures will not be used in our arguments.

The derived category of dg $F$-modules (which we usually speak of as $F$-modules) will be denoted $\dcat{F}$. We use suitable resolutions as needed, including semi-free dg modules, and semi-free dg algebras which we call \emph{models}; see \cites{Avramov:1998, Felix/Halperin/Thomas:2001}.
\end{chunk}

Throughout this section $F$ will be a dg algebra equipped with a surjective map $F\to \ell$, where $\ell$ is a field. We recall, from \cite{Gulliksen/Levin:1969}, the construction of Hopf algebra structures on $\Tor F{\ell}{\ell}$ and its dual $\Ext F{\ell}{\ell}$.

\begin{chunk}
\label{ch:hopf-tor}
For $F\to \ell$ as above, $\Tor{F}{\ell}{\ell}$ has a natural structure of a graded-commutative Hopf algebra over ${\ell}$. The product is induced by the map
\[
({\ell}\lotimes F {\ell})\lotimes {\ell} ({\ell}\lotimes F {\ell})  \lra {\ell}\lotimes F {\ell}
\]
obtained by multiplication on the respective factors, and following the Koszul sign rule.  In homology this induces the second map below:
\[
\hh({\ell}\lotimes F {\ell}) \otimes_{\ell} \hh({\ell}\lotimes F{\ell}) \xra{\ \cong\ } 
	\hh(({\ell}\lotimes F {\ell})\lotimes {\ell} ({\ell}\lotimes F {\ell})) \lra \hh({\ell}\lotimes F{\ell})\,.
\]
The first one is the K\"unneth map, which is an isomorphism since ${\ell}$ is a field. Thus one has a map
\begin{equation}
\label{eq:hha-product}
\Tor F{\ell}{\ell}\otimes_\ell \Tor F{\ell}{\ell} \lra \Tor F{\ell}{\ell}
\end{equation}
that makes $\Tor F{\ell}{\ell}$ is a graded-commutative $\ell$-algebra. This is where the commutativity comes in play. 

The coproduct on $\Tor F{\ell}{\ell}$ is induced by the natural map
\[
{\ell}\lotimes F{\ell} \lra {\ell}\lotimes F {\ell} \lotimes F {\ell} 
	\simeq ({\ell}\lotimes F {\ell})\lotimes {\ell} ({\ell}\lotimes F {\ell})
\]
induced by the assignment $x\otimes y \mapsto x\otimes 1\otimes y$. Using the fact that the K\"unneth map is an isomorphism, this induces the map
\begin{equation}
\label{eq:coproduct-tor}
\Tor F{\ell}{\ell} \lra \Tor F{\ell}{\ell} \otimes_\ell \Tor F{\ell}{\ell}\,.
\end{equation}
It is easy to verify that this map is a morphism of $\ell$-algebras, and that it defines a coproduct on $\Tor F{\ell}{\ell}$. 

The antipode on $\Tor F{\ell}{\ell}$  is induced by the twisting map
\[
{\ell} \lotimes F{\ell} \lra {\ell}\lotimes F{\ell} \quad\text{where $a\otimes b\mapsto (-1)^{|a||b|}b\otimes a$.}
\]
The computations that show that with the structures defined above,  $\Tor F{\ell}{\ell}$ is a Hopf algebra over ${\ell}$ are straightforward; see \cite{Gulliksen/Levin:1969} that deals with the case where $F$ is a ring. 
The same arguments carry over to our context as well.
\end{chunk}

\begin{chunk}
\label{ch:hopf-ext}
In this paragraph we assume  that the dg algebra is  $F$ \emph{degreewise noetherian} by which we mean that the ring $F_0$ is noetherian, and the $F_0$-modules $F_i$ are finitely generated for $i\ge 1$; recall that $F_i=0$ for $i<0$.  These conditions imply that $\rank_{\ell}\Tor [i]F\ell\ell$ is finite for each $i$, and zero for $i<0$. The adjunction isomorphism
\[
\Hom {\ell}{{\ell}\lotimes F{\ell}}{\ell} \cong \RHom F{\ell}{\ell}
\]
induces an isomorphism of ${\ell}$-vector spaces
\[
\Ext F{\ell}{\ell} \cong \Hom {\ell}{\Tor F{\ell}{\ell}}\ell\,.
\]
Thus, as per the discussion in \ref{ch:hopf-dual}, the Hopf algebra structure on $\Tor F{\ell}{\ell}$ induces such a structure on $\Ext F{\ell}{\ell}$, and since the former is commutative, and latter is cocommutative.  The multiplication thus defined on $\Ext F{\ell}{\ell}$ is the usual one given by composition.

\end{chunk}

\section{Hochschild cohomology}
\label{se:hha-and-hca}
In this section we introduce a Hopf algebra structure on the Hochschild cohomology of commutative algebras with field coefficients, and connect this with the  Hopf algebra $\Ext F{\ell}{\ell}$  defined in the previous section, for a suitable $F$. Both descriptions will be crucial: we need the Hochschild approach in Sections \ref{se:products-general} and \ref{se:atiyah}, and the Yoneda approach is Section \ref{se:lci}.

Throughout $\vf\colon R\to S$ will be a morphism of dg algebras, as in \ref{ch:dga}.  The main case of interest is when $R$ and $S$ are rings, but it is convenient to work in the greater generality, for it allows one to replace rings with semi-free dg models when needed. 

\begin{chunk}
\label{ch:bimodules}
We write $\env RS$ for the derived enveloping algebra, $S\lotimes RS$, of  $S$ over $R$.  It is convenient to replace the $R$-algebra $S$ by a suitable model and assume $\env RS= S\otimes_RS$.  We consider the  morphisms of dg algebras:
\[
 \mu\colon \env RS \to S\quad\text{and} \quad \tau \colon \env RS \to \env RS
\]
where $\mu$ is the multiplication map and $\tau$ is the twisting map, defined on pure tensors by $s\otimes t \mapsto (-1)^{|s||t|} t\otimes s$.

An $\env RS$-module is the same as an $S$-bimodule on which the $R$-action is symmetric, and it will be useful to keep both perspectives in mind. Given $\env RS$-modules $M$ and $N$, the tensor product $M\otimes_SN$ is again an $\env RS$-module, with left and right actions inherited from the left and right factors, respectively: 
\[
s \cdot (m\otimes n)\colonequals sm \otimes n \qquad\text{and}\qquad (m\otimes n)\cdot s \colonequals m \otimes ns
\]
for $s$ in $S$ and $m\otimes n$ in $M\otimes_SN$. Viewing $S$ viewed as a bimodule via $\mu$, the natural maps $M\otimes_S S\to M$ and $S\otimes_SN \to N$ are isomorphisms of bimodules. We repeatedly exploit the fact that this induces a structure of a tensor-triangulated category on $\dcat {\env RS}$,  with product $-\lotimes S-$ and unit $S$, so that 
\begin{equation}
\label{eq:unit}
S\lotimes S M  \simeq M \quad\text{and}\quad M \lotimes S S\simeq M
\end{equation}
for each $\env RS$-module $M$. This tensor product is not symmetric, and this leads to interesting structures on cohomology, as will become clear soon.
\end{chunk}

\begin{chunk}
\label{ch:hh-defn}
Given an $\env RS$-module $M$, let  $\hcm RSM$ be the \emph{Hochschild cohomology} of the $R$-algebra $S$, with coefficients in $M$; thus
\[
\hcm RSM \colonequals \Ext [*]{\env RS}SM\,.
\]
This is also called the Shukla cohomology, the derived Hochschild cohomology, or the Hochschild-Quillen cohomology~\cite{Quillen:1968}. 
\end{chunk}

\begin{chunk}
\label{ch:fieldcoef}
Let $\vf\colon R\to S$ be as above.  In the remainder of the section we fix a surjective map $S\to \ell$, where $\ell$ is a field. In addition, we ask that $R$ and $S$ be degreewise noetherian, in the sense of \ref{ch:hopf-ext}, and that the map $\vf_0\colon R_0\to S_0$ be \emph{essentially of finite type}; that is to say, $S_0$ is a localisation of a finitely generated $R_0$-algebra. This implies that the dg algebra $\env RS$ can be chosen degreewise noetherian as well. In this context we construct operations making 
\[
\hcm RS\ell
\]
into a graded, cocommutative Hopf $\ell$-algebra.

 We fix a semi-free dg algebra model $\ve\colon \wt S\xra{\sim} S$ over $\env RS$  and compute Hochschild cohomology of an $S$-bimodule $M$ using the complex
\[
\Hom{\env RS}{\wt S}{M}\,.
\]
By our hypotheses of $R$ and $S$, we can and will choose $\wt S$ to be degreewise noetherian.
\end{chunk}

\begin{chunk}
\label{ch:hh-product}
 The product on $\hcm RS\ell$ uses only the $S$-bimodule structure of $\wt S$. The tensor product $\wt S\otimes_S \wt S$ is viewed as an $S$-bimodule in the usual way; see \ref{ch:bimodules}. It is semi-free over $\env RS$ since $\wt S$ is assumed so. There are two natural quasi-isomorphisms $ \wt S\otimes_S \wt S\to \wt S$, namely $1\otimes_S\ve$ and $\ve\otimes_S 1$, and both represent the same morphism in $\dcat{\env RS}$ since they are coequalised by the quasi-isomorphism $\ve$. This yields the quasi-isomorphism below:
\begin{align*}
\Hom{\env RS}{\wt S}{\ell} \otimes_{\ell} &\Hom{\env RS}{\wt S}{\ell} 
	\longrightarrow \Hom{\env RS}{\wt S\otimes_S \wt S}{\ell}
	\xleftarrow{\sim} \Hom{\env RS}{\wt S}{\ell}\\
    & f_1  \otimes f_2   \quad \mapsto\quad (\wt S\otimes_S \wt S \xrightarrow{f_1\otimes_S f_2} \ell\otimes_S\ell = \ell)\,.
\end{align*}
In cohomology, the composite map, which entails inverting the one on the right, defines the cup product $f_1\smile f_2$; see also the discussion in \ref{ch:cupproduct}, and also \cite{Buchweitz/Roberts:2015}*{\S4}. The unit is the augmentation $S\to \ell$ composed with $\ve$.
\end{chunk}

\begin{chunk}
\label{ch:hh-coproduct}
The coproduct on $\hcm RS{\ell}$ uses the multiplication map $\wt\mu\colon \wt S\otimes_{\env RS} \wt S \to \wt S$. In contrast to the bimodule tensor product used to define the cup product,  the tensor product here treats both factors $\wt S$ as $\env RS$-modules in the \emph{same} way, exploiting commutativity. This induces the  map on the left below:
\[
\wt\Delta\colon 
\Hom{\env RS}{\wt S}{\ell} \xra{\Hom{}{\wt\mu}{\ell}}\Hom{\env RS}{\wt S\otimes_{\env RS} \wt S}{\ell} 
	\xleftarrow{\cong} \Hom{\env RS}{\wt S}{\ell} \otimes_{\ell} \Hom{\env RS}{\wt S}{\ell}\,. 
\]
The isomorphism is standard; it exists because $\wt S$ is degreewise noetherian.  Passing to cohomology, and using the K\"unneth isomorphism, yields the coproduct on $\hcm RS{\ell}$. In fact $\wt\Delta$ makes $\Hom{\env RS}{\wt S}{\ell}$ into a dg coalgebra over $\ell$, with counit the dual of the structure map $\env RS\to \wt S$.
\end{chunk}

\begin{chunk}
\label{ch:hh-antipode}
The antipode on $\hcm RS{\ell}$ is defined using the twisting map $\tau$ on $\env RS$. Since $\wt S$ is semi-free over $\env RS$ and $\ve$ is a quasi-isomorphism of dg algebras, one can construct a commutative diagram 
\[
\begin{tikzcd}[row sep= 5mm]
\wt S \ar[d,"\ve" swap]  \ar[r,"{\wt\tau}"]  & \tau_*(\wt S) \ar[d,"\ve"] \\
S \ar[r, equal] & S
\end{tikzcd}
\]
of dg $\env RS$-modules, where $\tau_*$ is the restriction of scalars along the twisting map $\tau \colon \env RS \to \env RS$. This defines the antipode $\sigma\colonequals \Ext{\tau}{\wt\tau}{\ell}$. Thus, given  an $\env RS$-linear chain map $f\colon \wt S\to \ell$ its antipode $\sigma(f)$ is the composition 
\[
\wt S\xrightarrow{\ \wt \tau\,}\tau_*(\wt S) \xrightarrow{\tau_*(f)} \tau_*(\ell)=\ell\,.
\]
\end{chunk}

Here is the result announced at the beginning of the section.

\begin{theorem}
\label{th:hh=hopf}
With $R\to S\to\ell$ as in \emph{\ref{ch:fieldcoef}}, the operations described above endow $\hcm RS{\ell}$ with the  structure of a graded, cocommutative, Hopf $\ell$-algebra.
\end{theorem}

\begin{proof}
The associativity and unitality of the cup product---as well as the coassociativity and counitality of the coproduct---can be verified in a straightforward way. The graded cocommutativity of $\hcm RS{\ell}$ follows from the graded commutativity of $\wt S$. For the bialgebra identity, take chain maps $f$ and $g$ in $\Hom{\env RS}{\wt S}{\ell}$, and consider the following commutative diagram
\[
\begin{tikzcd}[row sep=5mm]
(\wt S\otimes_S\wt S)\otimes_{\env RS}(\wt S\otimes_S\wt S) \ar[d,"(\ve\otimes 1)\otimes (\ve \otimes 1)"',"\sim"] \ar[r] &  \wt S\otimes_S\wt S \ar[d,"\ve\otimes 1"',"\sim"]\ar[r,"f\otimes_Sg"]& \ell\\
\wt S\otimes_{\env RS}\wt S \ar[r] & \wt S\,,
\end{tikzcd}
\]
where the top row is given by $(x\otimes y) \otimes (u\otimes v)\mapsto (-1)^{|y||x|}xu\otimes yv$. The upper path represents $(\smile \otimes \smile)(1\otimes \tau\otimes 1) (\Delta(f)\otimes \Delta(g))$, while the lower path (inverting the quasi-isomorphism) represents $\Delta(f\smile g)$, so the diagram is witness to the bialgebra identity.

It remains to show that $\sigma$ is an antipode; the argument for this anticipates the proof of Theorem \ref{th:equivariance}; confer also the discussion in \ref{ch:classical-adjoint}. Let $f$ be a chain map in $\Hom{\env RS}{\wt S}{\ell}$ with $\Delta(f) =\sum f_1\otimes f_2$. We must show that 
\[
{\textstyle \sum}  f_1\smile \sigma(f_2) = \eta\ve(f) = {\textstyle \sum} \sigma(f_1)\smile f_2 \,.
\]
We verify the equality on the left; the argument for the one on the right is similar. It is sufficient to verify that $\sum f_1\smile \sigma(f_2)=0$ when $f$ is in the kernel of the counit $\ve$. Denote by $\iota\colon S\to \env RS$ the inclusion into the left tensor factor, and consider the commutative diagram
\[
\begin{tikzcd}[column sep=20mm,row sep=6mm]
\wt S\otimes_S \wt S \ar[d,"\kappa"'] \ar[r,"\sum f_1\smile \sigma(f_2) "] & \ell \ar[d, equals] \\
\mu_* {\iota}_*(\wt S)\ar[r,"\mu_* {\iota}_*(f)"] & \mu_* {\iota}_*(\ell)\,.
\end{tikzcd}
\]
Here $\kappa( x\otimes y) =x\wt\tau(y)$; one checks directly that $\kappa$ is well-defined and $\env RS$-linear. Finally, if $\ve(f)=0$ then $\mu_* {\iota}_*(f)=0$, since already the class of ${\iota}_*(f)$ vanishes in 
\[
\Hom{S}{{\iota}_*(\wt S)}{\ell}\simeq \Hom{S}{S}{\ell} = \ell\,. \qedhere
\]
\end{proof}

Next, we identify the Hochschild Hopf algebra just constructed with a Yoneda Hopf algebra of the previous section.

\begin{chunk}
\label{ch:fiber}
We remain in the context of \ref{ch:fieldcoef}, and set $F\colonequals {\ell} \lotimes R S $, viewed as a dg $\ell$-algebra. The dg algebra $\env RS$ can be chosen degreewise noetherian, so the same property is inherited by  $F$ and its homology algebra.

The map $S\to \ell$ induces the morphism 
\[
\env RS = S\lotimes RS \to {\ell} \lotimes RS = F
\]
of dg algebras. Moreover the composition  $\env RS\xrightarrow{\mu}S\to {\ell}$ factors through this map and induces quasi-isomorphisms
\[
F\lotimes {\env RS} S = (\ell\lotimes RS) \lotimes {\env RS} S \simeq \ell\lotimes SS \simeq {\ell}\,.
\]
where the first one is the standard diagonal isomorphism.  Apply $\RHom{F}-{\ell}$ and using adjunction  yields an isomorphism
\[
\RHom{\env RS}S{\ell}\simeq  \RHom F{\ell}{\ell}\,.
\]
In summary, there is an isomorphism of $\ell$-vector spaces
\begin{equation}
\label{eq:hh=ext}
\hcm RS{\ell}\colonequals \Ext[*]{\env RS}S{\ell} \cong \Ext [*]F{\ell}{\ell}\,.
\end{equation}
Here is an explicit description in terms of morphisms: Given $\zeta\colon S\to \susp^n\ell$ in $\dcat{\env RS}$ the corresponding morphism in $\dcat F$ is the composition
\[
\ell \cong \ell \lotimes S S \xra{\ \ell\lotimes S \susp^n \zeta} \susp^n \ell\lotimes S \ell \lra \susp^n \ell
\]
where the map on the right is induced by the multiplication on $\ell$. On the other hand, given $\xi\colon \ell\to \susp^n\ell$ in $\dcat F$ composing with the map $S\to \ell$ gives the morphism 
\[
S\lra \ell \xra{\ \xi\ } \susp^n\ell
\]
in $\dcat {\env RS}$.  A straightforward computation shows that these assignments are inverse isomorphisms yielding \eqref{eq:hh=ext}.
\end{chunk}

\begin{chunk}
Since $F$ is degreewise noetherian, from the discussion in \ref{ch:hopf-ext} there is  a natural structure of a Hopf $\ell$-algebra on $\Ext[*]{F}{\ell}{\ell}$. On the other hand, $\hcm RS{\ell}$ is also a Hopf $\ell$-algebra, by Theorem~\ref{th:hh=hopf}. The result below should come as no surprise. 

\begin{proposition}
\label{pr:HH=Ext}
The isomorphism $\hcm RS{\ell}\cong \Ext[*]F{\ell}{\ell}$ in \eqref{eq:hh=ext} is compatible with the Hopf algebra structures.
\end{proposition}

\begin{proof}
We only need to check compatibility with product and coproduct, for the anitpode is determined by those structures. 

We can assume that $S$ is semi-free over $R$ and so identify $F$ with the dg $\ell$-algebra $\ell\otimes_RS$.
Let $\ve\colon \wt S\to S$ be a semi-free dg model of $S$ over $\env RS$. Since $\wt S$ is also semi-free over $S$, applying $\ell\otimes_S(-)$ to $\ve$ yields a quasi-isomorphism
\[
X\colonequals \ell \otimes_S \wt S \xra{ \ve' } \ell \,.
\]
of dg algebras over $F$. Note that $X$ is semi-free  over $F$.

Applying $\ell\otimes_S(-)$ to the  multiplication map on $\wt S$ yields the multiplication map on $X$. The coproducts on $\hcm RS{\ell}$ and $\Ext[*]F{\ell}{\ell}$ are induced by these maps---see the discussion in \ref{ch:hopf-tor}, \ref{ch:hopf-ext}, and \ref{ch:hh-coproduct}---so we deduce that they coincide. 

As to the product structures, the one on $\hcm RS{\ell}$ is induced by the quasi-isomorphism of dg $\env RS$-algebras
\[
\wt S \xleftarrow{\ 1\otimes \ve\ } \wt S\otimes_S \wt S\,,
\]
as explained in \ref{ch:hh-product}. This induces the top row of the following  diagram 
\[
\begin{tikzcd}[column sep=large]
X\otimes_F X\cong (X\otimes_\ell \ell) \otimes_F X \ar[dr] \ar[r,leftarrow,"\simeq" swap,  "1\otimes \ve'\otimes 1"] 
    & (X\otimes_\ell X) \otimes_F X \ar[d] \\ 
    & X\otimes_F X\otimes_F X
\end{tikzcd}
\]
In the top row, the action of $F$ on $X\otimes_\ell\ell$ is through $X$. The diagonal map is defined by the assignment $x\otimes y\mapsto x\otimes 1\otimes y$. The diagram commutes in the derived category of dg $\ell$--modules, for the two maps in question are coequalized by the quasi-isomorphism $1\otimes \ve'\otimes 1$. It remains to observe that the diagonal map induces the product on $\Ext[*]{F}{\ell}{\ell}$; see \ref{ch:hopf-tor} and \ref{ch:hopf-ext}.
\end{proof}
\end{chunk}

\section{Actions of Hochschild cohomology}
\label{se:products-general}
This section concerns the action of $\hcm RS{\ell}$ on certain cohomology modules arising from bimodules.  The main new result  is Theorem~\ref{th:equivariance} and that is at the heart of all that follows. We prepare the ground to state and prove it.

\begin{chunk}
\label{ch:hypotheses}
Throughout  $\vf\colon R\to S$ and $S\to \ell$ are morphisms of dg algebras such that:
\begin{enumerate}[\quad\rm(1)]
\item
$R_0$ is a noetherian ring and the $R_0$-module $R_i$ is finitely generated for $i\ge 1$;
\item
$S_0$ is essentially of finite type as an $R_0$-algebra, and the  $S_0$-module $S_i$ is finitely generated for $i\ge 1$;
\item
$S\to \ell$ is a surjective map with $\ell$ a field. 
\end{enumerate}
These are hypothesis from \ref{ch:fieldcoef} onward, and we freely draw upon the discussion in the preceding section.
\end{chunk}

\begin{chunk}
\label{ch:cupproduct}
 Given $\env RS$-modules $M$ and $N$ one gets an external product
\begin{equation}
\label{eq:cap-product}
\begin{tikzcd}
\Ext {\env RS}M{\ell} \otimes_{\ell} \Ext {\env RS}N{\ell} \ar[r,"\pitchfork"] 
	 &\Ext {\env RS}{M\lotimes SN}{\ell}\,,
\end{tikzcd}
\end{equation}
where $M\lotimes SN$ is viewed as an $\env RS$-module as explained in \ref{ch:bimodules}. Associativity of tensor products implies that this product is associative in the obvious sense. Setting $M=S=N$ one gets that $\Ext{\env RS}S{\ell}$, that is to say, $\hcm RS{\ell}$ is a graded $\ell$-algebra. This algebra structure is the same as the one introduced in \ref{ch:hh-product}.  Moreover,  given \eqref{eq:unit}, specialising $M$ or $N$ to $S$, yields the following:

\begin{proposition}
\label{pr:cap-product}
For each $M$ in $\dcat{\env RS}$, the product defined via \eqref{eq:cap-product} endows $\Ext{\env RS}M{\ell}$ with a natural structure of a $\hcm RS{\ell}$-bimodule. \qed
\end{proposition}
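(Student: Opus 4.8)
The plan is to read both module actions off the external product \eqref{eq:cap-product}, using that $S$ is the unit object of the tensor-triangulated structure on $\dcat{\env RS}$. For $a\in\hcm RS{\ell}=\Ext{\env RS}S{\ell}$ and $m\in\Ext{\env RS}M{\ell}$ I would define the left action $a\cdot m$ to be the image of $a\pitchfork m\in\Ext{\env RS}{S\lotimes SM}{\ell}$ under the isomorphism induced by the left unit $S\lotimes SM\simeq M$ of \eqref{eq:unit}; symmetrically, the right action $m\cdot a$ is the image of $m\pitchfork a\in\Ext{\env RS}{M\lotimes SS}{\ell}$ under the right unit $M\lotimes SS\simeq M$. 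Specializing $M=S$ returns the graded $\ell$-algebra structure on $\hcm RS{\ell}$ recorded in \ref{ch:cupproduct}, together with its unit.

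Everything then reduces to the associativity of $\pitchfork$ asserted in \ref{ch:cupproduct} together with the coherence of the unit isomorphisms \eqref{eq:unit} with the associativity constraint, both of which are part of the tensor-triangulated structure on $\dcat{\env RS}$. For the left action I would verify $(ab)\cdot m=a\cdot(b\cdot m)$ by comparing each side to the image of $a\pitchfork b\pitchfork m\in\Ext{\env RS}{S\lotimes SS\lotimes SM}{\ell}$: associativity of $\pitchfork$ identifies the two bracketings, and the triangle identity relating the associator to the unitors $S\lotimes SS\simeq S$ and $S\lotimes SM\simeq M$ gives the equality. Unitality $1\cdot m=m$ follows from the same triangle identity, and the right action is treated identically.

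The one genuinely two-sided point is the bimodule compatibility $(a\cdot m)\cdot b=a\cdot(m\cdot b)$, where the non-symmetry of $-\lotimes S-$ is felt. I would deduce it by showing that both sides equal the image of $a\pitchfork m\pitchfork b\in\Ext{\env RS}{S\lotimes SM\lotimes SS}{\ell}$ under the evident unit isomorphisms, again using associativity of $\pitchfork$ and naturality of the unitors.

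I expect the main obstacle to be bookkeeping rather than anything conceptual: one must keep track of how the unit isomorphisms of \eqref{eq:unit} interact with the associativity of the external product and with the Koszul signs built into $\pitchfork$. Since $\dcat{\env RS}$ is tensor-triangulated, the needed coherence is automatic, so beyond invoking the monoidal axioms there is nothing further to check; this is presumably why the authors leave the statement to the reader.
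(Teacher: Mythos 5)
Your proposal is correct and follows essentially the same route as the paper, which records the proposition with a \qed precisely because the actions are read off the external product $\pitchfork$ via the unit isomorphisms \eqref{eq:unit}, and the bimodule axioms then follow from the associativity of $\pitchfork$ together with the monoidal coherence of $(\dcat{\env RS}, -\lotimes S-, S)$. The unit/associator bookkeeping you flag is exactly the content the authors leave to the reader, so there is nothing further to add.
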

\end{chunk}

One can describe this explicitly on models; this will be useful later on.

\begin{chunk}
\label{ch:bimod}
We work with models as in \ref{ch:fieldcoef}. Let $M$ be a semi-free $\env RS$-module. The map
\[
\wt S\otimes_S M \otimes_S \wt S \lra M \quad \text{where $s\otimes m\otimes s'\mapsto sms'$}
\]
is evidently $\env RS$-linear, where the bimodule structure on the left is the natural one,  via the outer factors of the tensor product. It is also a quasi-isomorphism, by \eqref{eq:unit}.  Given chain maps $\alpha,\beta$ in $\Hom{\env RS}{\wt S}{\ell}$ and $f$ in $\Hom{\env RS}M{\ell}$, the class of  $\alpha\cdot f\cdot \beta$ is represented by the commutative diagram
\[
\begin{tikzcd}
 M \ar[d,"\alpha\cdot f\cdot \beta",swap]  \ar[r,leftarrow, "\sim"] & \wt S \otimes_S M \otimes_S \wt S \ar[d,"\alpha\otimes f\otimes \beta"] \\
 \ell \ar[r,leftarrow] & \ell\otimes_S \ell\otimes_S \ell 
\end{tikzcd}
\]
in $\dcat{\env RS}$, where the arrow in the bottom is given by the multiplication on $\ell$.
\end{chunk}

Recall from Theorem~\ref{th:hh=hopf}  that $\hcm RS{\ell}$ is a Hopf algebra. Its two-sided action on $\Ext{\env RS}M{\ell}$ can thus be combined into a single adjoint action, as explained in \ref{ch:classical-adjoint}. Here is a chain-level description.

\begin{chunk}
\label{ch:hh-adjoint}
Let $\alpha$ in $\Hom{\env RS}{\wt S}{\ell}$ be a chain map and using \ref{ch:hh-coproduct} write
\[
\wt\Delta(\alpha)= {\textstyle \sum}  \alpha_1\otimes \alpha_2\,.
\]
 A caveat: The maps $\alpha_1$ and $\alpha_2$ need not be chain maps, though they can be chosen to be so,  at the expense of replacing equality above by an equality of cohomology classes. Observe that, by definition, one has
\begin{equation}
\label{eq:hh-coproduct}
\alpha(xy) = {\textstyle \sum}  (-1)^{|\alpha_2||y|} \alpha_{1}(x)\alpha_{2}(y) \quad\text{for all $x,y\in \wt S$.}
\end{equation}
This is just the chain-level version of \eqref{eq:dual-coproduct}.

Let $f\colon M\to \ell$ be a chain map.  It follows from the description of the coproduct~\ref{ch:hh-coproduct} and the antipode~ \ref{ch:hh-antipode} that the adjoint action of the class of $\alpha$ on the class of $f$ is represented by the following commutative diagram
\begin{equation}
\label{eq:hh-adjoint}
\begin{tikzcd}
 M \ar[d,"\ad(\alpha)\cdot f" swap]  \ar[r,leftarrow, "\sim"] & \wt S \otimes_S M \otimes_S \wt S \ar[d,"\sum (-1)^{|\alpha_2| |f|} \alpha_1 \otimes f\otimes (\alpha_2\wt\tau)"] \\
 \ell \ar[r,leftarrow] & \ell\otimes_S \ell\otimes_S \ell 
\end{tikzcd}
\end{equation}
 in $\dcat{\env RS}$. It can be checked directly that the map on the right is a chain map.
\end{chunk}

We need one more piece of structure: For any $S$-module $M$, the Hochschild cohomology algebra $\hcm RS{\ell}$ acts on $\Ext SM{\ell}$ through Yoneda composition. A chain level description of this  \emph{characteristic action}  is given below.

\begin{chunk}
\label{ch:Yoneda}
Let $M$ be a semi-free $S$-module and consider the $S$-module $\wt S\otimes_S M$, where $S$ acts through the left-hand factor. The map  $\wt S\otimes_S M\lra  M$, where $s\otimes m\mapsto sm$,  is evidently $S$-linear; it is also a quasi-isomorphism, by \eqref{eq:unit}. Given chain maps $\alpha$ in $\Hom{\env RS}{\wt S}{\ell}$ and $f$ in $\Hom SM{\ell}$, the class of $\alpha\cdot f$ is represented by 
\[
\begin{tikzcd}
 M \ar[d,"\alpha\cdot f",swap]  \ar[r,leftarrow, "\sim"] & \wt S \otimes_S M  \ar[d,"\alpha\otimes f"] \\
 \ell \ar[r,leftarrow] & \ell\otimes_S \ell 
\end{tikzcd}
\]
where the arrow in the bottom row is given by multiplication on $\ell$.
\end{chunk}

The result below is the one we have been working towards. It is about the restriction functor $\mu_*\colon\dcat{S}\to \dcat{\env RS}$ associated to the multiplication map $\mu\colon \env RS\to S$.  

\begin{theorem}
\label{th:equivariance}
Let $R\to S\to \ell$ be as in \ref{ch:hypotheses}.  For each $S$-module $M$ the  map 
\[
\mu_*\colon \Ext SM\ell\longrightarrow \Ext{\env RS}{\mu_*(M)}\ell
\]
is linear with respect to the characteristic action of $\hcm RS{\ell}$ on the left and its adjoint action on the right.
\end{theorem}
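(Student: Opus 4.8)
The plan is to prove the equivariance by a direct chain-level comparison of the two actions, using the explicit models assembled in Section~\ref{se:hha-and-hca}. I fix the semifree model $\wt S$ of $S$ over $\env RS$ from~\ref{ch:finite-models}, represent a class of $\Ext SM\ell$ by a chain map $f\colon M\to\ell$ against the $S$-linear resolution $\wt S\otimes_S M\to M$ of~\ref{ch:Yoneda}, and represent classes of $\Ext{\env RS}{\mu_*(M)}\ell$ against the $\env RS$-linear resolution $\wt S\otimes_S M\otimes_S\wt S\to M$ of~\ref{ch:bimod}. The first task is to pin down $\mu_*$ at the chain level. Restriction along $\mu$ is computed by an $\env RS$-linear comparison quasi-isomorphism between the bimodule resolution $\wt S\otimes_S M\otimes_S\wt S$ and the restriction of the one-sided resolution $\wt S\otimes_S M$; precomposition with it carries a representative over $S$ to a representative of its image under $\mu_*$. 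Getting this comparison right is already delicate, since $\wt S$ carries two distinct $S$-actions that must be matched correctly against the symmetric structure on $\mu_*(M)$, and the naive ``collapse the right factor through the augmentation'' map is not $\env RS$-linear unless these actions are reconciled.

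With $\mu_*$ pinned down I would evaluate both actions on the resolution $\wt S\otimes_S M\otimes_S\wt S$. The adjoint side is given directly by~\ref{ch:hh-adjoint}: writing $\wt\Delta(\alpha)=\sum\alpha_1\otimes\alpha_2$ as in~\ref{ch:hh-coproduct} and using the chain-level antipode $\alpha_2\wt\tau$ of~\ref{ch:hh-antipode}, the class $\ad(\alpha)\cdot\mu_*(f)$ is represented by the cocycle $s\otimes m\otimes s'\mapsto\sum\pm\,\alpha_1(s)\,f(m)\,\alpha_2(\wt\tau(s'))$. For the characteristic side, the Yoneda action~\ref{ch:Yoneda} touches only the left tensor factor through $\alpha$, the right-hand factor entering through the augmentation $\bar\ve\colon\wt S\to\ell$; transporting this along the comparison of resolutions spreads $\alpha$ across the bimodule resolution, and the content of the theorem is that the resulting cocycle agrees, in cohomology, with the adjoint cocycle above. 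Cancelling the common middle factor $f(m)$, the problem reduces to an identity between $\sum\pm\,\alpha_1(s)\,\alpha_2(\wt\tau(s'))$ and a one-sided expression in which the right variable is seen only through $\bar\ve$.

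To close this I would feed the reduction into the Hopf-algebra formalism of Section~\ref{se:Hopf}. The coproduct evaluation formula~\eqref{eq:hh-coproduct} rewrites the left factor as $\alpha$ applied to a product, and then the defining property of the antipode from~\ref{ch:Hopf-algebras}, in the form $\sum\alpha_1\,\wt\sigma(\alpha_2)=\ve(\alpha)\cdot 1$, together with the counit axiom $\sum\alpha_1\,\ve(\alpha_2)=\alpha$, collapses the right-hand contribution exactly to the augmentation. The case $M=S$ is a reassuring check: there the image of $\mu_*$ is the unit line of $\hcm RS\ell$, on which both the adjoint action and the pushed-forward characteristic action reduce, by the antipode axiom, to multiplication by $\ve(\alpha)$. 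I expect the main obstacle to lie precisely in this contraction, for a structural reason: the coproduct $\wt\Delta$ is manufactured from the multiplication $\wt\mu$ taken over $\env RS$, whereas the bimodule resolution and the Hochschild product~\ref{ch:hh-product} live over $S$, and the tensor product on $\dcat{\env RS}$ is not symmetric. Reconciling $\otimes_{\env RS}$ against $\otimes_S$ across the comparison of resolutions --- while tracking Koszul signs and remembering, as warned in~\ref{ch:hh-adjoint}, that the $\alpha_i$ are cocycles only up to homotopy so that every displayed equality must be read as an equality of cohomology classes --- is where the genuine work sits, and the reduction to the antipode axiom is exactly what the interplay between symmetric and general bimodules amounts to in this setting.
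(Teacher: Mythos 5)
Your scaffolding matches the paper's---the models of \ref{ch:finite-models}, representing classes by chain maps, and the adjoint cocycle $\sum\pm\,\alpha_1(x)\,f(m)\,\alpha_2(\wt\tau(y))$ from \ref{ch:hh-adjoint}---but the reduction you propose at the end is to a false statement, and the antipode axiom cannot close it. The ingredient you defer as ``delicate'' is in fact the whole proof: one must write down the comparison map explicitly, and the paper's choice is
\[
\kappa\colon \wt S\otimes_S \mu_*(M)\otimes_S\wt S\lra \mu_*(\wt S\otimes_S M),
\qquad x\otimes m\otimes y\mapsto(-1)^{|m||y|}\,x\,\wt\tau(y)\otimes m\,,
\]
which is $\env RS$-linear precisely because $\wt S$ is graded-commutative and $\wt\tau$ converts the right $\env RS$-action into the left one. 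Along $\kappa$ the characteristic cocycle $\alpha\otimes f$ pulls back to $x\otimes m\otimes y\mapsto\pm\,\alpha(x\,\wt\tau(y))\,f(m)$: the right-hand variable enters through $\alpha$ applied to a \emph{product}, not through the augmentation $\bar\ve$. At that point the theorem follows from the chain-level coproduct formula \eqref{eq:hh-coproduct}, which gives $\alpha(x\,\wt\tau(y))=\sum\pm\,\alpha_1(x)\,\alpha_2(\wt\tau(y))$ on the nose; no antipode axiom, no counit axiom, and no passage to equality of cohomology classes is needed.

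The concrete error is your identification of the transported characteristic cocycle with the one-sided expression $\pm\,\alpha(x)\,f(m)\,\bar\ve(y)$. By \ref{ch:bimod} that cocycle represents the left cap-product action $\alpha\cdot\mu_*(f)$ (the right factor acting by the unit class), whereas the theorem concerns $\mu_*(\alpha\cdot f)$, and these genuinely differ. Test it on the case you offer as a reassuring check, $M=S$ with $f$ the augmentation: since $\Ext SS{\ell}$ is concentrated in degree $0$, the characteristic action gives $\alpha\cdot f=0$ for $|\alpha|\geq 1$, so $\mu_*(\alpha\cdot f)=0$ and likewise $\ad(\alpha)(\mu_*f)=\ve(\alpha)\cdot 1=0$; but your one-sided cocycle represents $\alpha\cdot 1=\alpha\neq 0$. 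So the identity you plan to prove---adjoint cocycle cohomologous to the $\bar\ve$-expression---fails exactly in your check case, which you did not notice because you checked both sides of the theorem rather than your intermediate claim. Relatedly, ``cancelling the common middle factor $f(m)$'' is not a legitimate operation: the contraction $\sum\pm\,\alpha_1\,\wt\sigma(\alpha_2)=\ve(\alpha)\cdot 1$ can only be applied after commuting $\mu_*(f)$ past $\sigma(\alpha_2)$, i.e.\ after deleting the middle tensor factor, and if that were allowed the adjoint action on the image of $\mu_*$ would collapse to multiplication by $\ve(\alpha)$, trivializing the theorem. The tension you correctly flag between $\otimes_S$ and $\otimes_{\env RS}$ is resolved not by Hopf identities in cohomology but by the single twisted map $\kappa$ above.
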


\begin{proof}
The proof becomes a direct computation, once we recall the actions involved in a convenient form. We can assume we are in the context of  \ref{ch:fieldcoef}, and also that the $S$-module $M$ is semi-free. Thus elements in $\hcm RS{\ell}$ and $\Ext SM{\ell}$ are represented by chain maps in $\Hom{\env RS}{\wt S}{\ell}$ and $\Hom SM{\ell}$, respectively.

Since $M$ is semi-free over $S$, the  modules
\[
\wt S\otimes_S \mu_*(M) \otimes_S \wt S  \quad\text{and} \quad \wt S\otimes_S M
\]
are semi-free over $\env RS$ and $S$, respectively. As usual, the $S$-bimodule structure on the module on the left is via the outer factors of the tensor product. Both these modules are quasi-isomorphic to $M$, as explained in  \ref{ch:bimod} and \ref{ch:Yoneda}. It can be checked directly that the map
\begin{align*}
\kappa\colon & \wt S\otimes_S \mu_*(M) \otimes_S \wt S \lra \mu_* (\wt S\otimes_S M)  	\\
		      & x\otimes m\otimes y\mapsto (-1)^{|m||y|}x\wt\tau(y)\otimes m
\end{align*}
is well-defined, $\env RS$-linear, and a lifting of the identity on $M$ across $\mu$. Thus the map $\mu_*$ in the statement of the theorem is induced by the map
\[
 \Hom S{\wt S\otimes_SM}{\ell} 	\xra{\Hom{\mu}{\kappa}{\ell}} \Hom {\env RS}{\wt S\otimes_S\mu_*(M)\otimes_S \wt S}{\ell}\,.
\]
The action of $\hcm RS{\ell}$ on the source and target of this map have been described in \ref{ch:Yoneda} and \ref{ch:bimod}, respectively.

Let $\alpha$ in $\Hom{\env RS}{\wt S}{\ell}$ be a chain map, and write $\wt\Delta(\alpha)= \sum \alpha_{1}\otimes \alpha_{2}$ as in \ref{ch:hh-coproduct}. Let $f$ in $\Hom SM{\ell}$ be a chain map, set 
\[
g\colonequals \sum (-1)^{|\alpha_2||f|}\alpha_1 \otimes \mu_*(f) \otimes (\alpha_2\wt\tau)\colon 
	\wt S\otimes_S \mu_*(M) \otimes_S \wt S\lra \ell \otimes_S \ell\otimes_S\ell\,,
\]
and consider the following diagram of $\env RS$-modules:
\[
\begin{tikzcd}[column sep=small]
\mu_*(M)  \ar[d, "\ad(\alpha)\cdot \mu_*(f)" swap] \ar[r,leftarrow,"\sim"] 
	& \wt S\otimes_S \mu_*(M) \otimes_S \wt S \ar[d, "g"] \ar[rr,"\kappa"]
		 && \mu_*(\wt S\otimes_S M)  \ar[d, "\alpha\otimes f"] \ar[r,"\sim"] & M \ar[d,"\alpha\cdot f"]\\
\ell \ar[r,leftarrow] &\ell \otimes_S \ell \otimes_S \ell \ar[r] & \ell &\ell\otimes_{S} \ell \ar[l] \ar[r] & \ell
\end{tikzcd}
\]
where the squares on the left and on the right commute, by  \ref{ch:hh-adjoint} and \ref{ch:Yoneda} respectively.  We claim that the diagram in the middle is also commutative: going down from the top of the rectangle along the left edge is the map
\[
 x\otimes m\otimes y \mapsto  (-1)^{|f||x|} \alpha(x\wt\tau(y))f(m) \,.
\]
This computation uses  \eqref{eq:hh-coproduct}, and the fact that $f(m)=0$ unless $|f|=-|m|$. We leave it to the reader's pleasure to verify that going the other way gives the same map, with the correct sign. This justifies the equivariance of $\mu_*$.
\end{proof}

\section{The Atiyah character}
\label{se:atiyah}
In this section $\vf\colon R\to S$ is a map of commutative noetherian rings and  $\cotan{\vf}$ is its cotangent complex. We sometimes write $\Ext [*]S{\cotan \vf}N$ for Andr\'e-Quillen cohomology $\aqch{*}SR N$ as it makes certain constructions  transparent. We begin by sketching the construction of the cotangent complex; for details see \cites{Quillen:1968,Illusie:1971,Andre:1974}. 

\begin{chunk}
\label{ch:cotangent}
As usual $\env RS$ is the derived enveloping algebra of $S$ over $R$. Following \cite{Quillen:1968}*{\S6},  let $R\to P\xrightarrow{\sim} S$ be a simplicial resolution of $S$, so that $\env RS=S\otimes_RP$ is a simplicial model for the derived enveloping algebra. Set
\[
J\colonequals \Ker(S\otimes_RP\to S)
\]
so $J/J^2$ is the cotangent complex of $\vf$,  by  \cite{Quillen:1968}*{\S 6}. Thus there is an exact sequence
\begin{equation}
\label{eq:principalseq}
0\lra \cotan \vf \lra (S\otimes_R P)/J^2\lra S\lra 0
\end{equation}
of complexes over $S\otimes_R P=\env RS$. The corresponding connecting morphism
\[
\atiyahclass \vf\colon S\lra \susp \cotan{\vf}
\]
 in $\dcat{\env RS}$ is the \emph{universal Atiyah class} of $\vf$.  This was map was defined by Illusie \cite{Illusie:1971}*{IV.2.3.6.2}, whilst the terminology
 is borrowed from \cite{Buchweitz/Flenner:2008}.
\end{chunk}

\begin{chunk}
\label{ch:atchar}
Fixing an $S$-module $N$, one can dualise  the universal Atiyah class, with respect to $N$, along the multiplication map $\mu\colon \env RS\to S$ to obtain 
\[
\atiyah{\vf}{N}\colon \Ext[*] S{\cotan \vf}N\lra \hcm[*+1] RSN\,.
\]
This is a map of degree one from the Andr\'e-Quillen cohomology to the Hochschild cohomology of $S$ over $R$, both with coefficients in $N$.  We call this map the \emph{Atiyah character} of $\vf$ with coefficients in $N$, mainly because of Theorem~\ref{th:atiyah-equivariance}.
By definition $\atiyah[n]{\vf}{N}$, the component in degree $n$, factors as
\begin{equation}
\label{eq:atiyahdef}
\Ext[n]S{\cotan{\vf}}{N} \xrightarrow{\mu_*} \Ext[n]{\env RS}{\cotan{\vf}}{N} \xrightarrow{\dual{(\atiyahclass \vf)}}\Ext[n+1]{\env RS}{S}{N}\,;
\end{equation}
where $\dual{(\atiyahclass {\vf})}$ is used as a shorthand for $\Ext {\env RS}{\atiyahclass \vf}N$. 

Specialising $N$ to $\ell$ and applying Proposition~\ref{pr:cap-product} and Theorem~\ref{th:equivariance} yields:

\begin{theorem}
\label{th:atiyah-equivariance}
Let  $R$ be a commutative noetherian ring, $\vf\colon R\to S$ a map essentially of finite type, and $S\to \ell$ a surjection onto a field.  The Atiyah character
\[
\atiyah {\vf}{\ell}\colon \aqch{*}RS {\ell} \lra \hcm[*+1] RS{\ell}
\]
 with coefficients in $\ell$,  is equivariant with respect to the characteristic action of $\hcm RS{\ell}$ on the left and the adjoint action on the right. \qed
\end{theorem}
\end{chunk}

\begin{chunk}
Recall that $\HH 0{\cotan{\vf}}$ is the module of K\"ahler differentials of $S$ over $R$.  Thus one can identify $\Ext[0]S{\cotan{\vf}} N$ with $\mathrm{Der}_R(S,N)$, the module of $R$-linear derivations from $S$ into $N$. The Atiyah character in degree zero  is a familiar isomorphism
\[
\atiyah[0]{\vf}N\colon \mathrm{Der}_R(S,N)\xra{\ \cong\ } \hcm[1]RS{N}\,.
\]

When $\vf$ is surjective, with kernel $I$, one has
\begin{equation}
\label{eq:cotan01}
\HH 0{\cotan{\vf}} =0 \quad\text{and}\quad \HH 1{\cotan{\vf}} \cong I/I^2\,.
\end{equation}
Thus one gets an isomorphism
\[
\Ext [1]S{\cotan{\vf}}N\cong \Hom S{\HH 1{\cotan{\vf}}} N \cong \Hom S{I/I^2}N\,.
\]
The degree one part of the Atiyah character will play a crucial role in the sequel.

\begin{lemma}
\label{le:at2}
When $\vf$ is surjective, the Atiyah character is bijective in degree one: 
\[
\atiyah[1]{\vf}N \colon \Ext[1]S{\cotan{\vf}}{N}\xra{\ \cong\ }  \hcm[2]RS{N}\,.
\]
\end{lemma}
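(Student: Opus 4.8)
The plan is to funnel both sides through the conormal module and reduce everything to a single map on it. Throughout write $I\colonequals\Ker\vf$, so that by \eqref{eq:cotan01} one has $\HH 0{\cotan\vf}=0$ and $\HH 1{\cotan\vf}\cong I/I^2$; in particular $\cotan\vf$ has homology concentrated in degrees $\ge 1$.

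\emph{The source.} First I would identify $\Ext[1]S{\cotan\vf}N$. Applying $\Hom{\dcat S}{-}{\susp N}$ to the truncation triangle $\tau_{\ge 2}\cotan\vf\to\cotan\vf\to\susp(I/I^2)$, the truncated tail contributes nothing in this cohomological degree, so one gets a natural isomorphism $\Ext[1]S{\cotan\vf}N\cong\Hom S{I/I^2}N$. Running the identical computation over $\env RS$ --- using that restriction along $\mu$ is exact and preserves $\HH 1$ --- identifies $\Ext[1]{\env RS}{\cotan\vf}N$ with $\Hom S{I/I^2}N$ as well and shows that the map $\mu_*$ of \eqref{eq:atiyahdef} is an isomorphism.

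\emph{The target.} Let $J$ be the fiber of $\mu\colon\env RS\to S$ in $\dcat{\env RS}$. Since $\env RS$ is the unit of the tensor product, $\RHom{\env RS}{\env RS}N\simeq N$, so the long exact sequence of the triangle $J\to\env RS\xra{\mu}S\xra{\partial}\susp J$ collapses to an isomorphism $\Ext[1]{\env RS}JN\xra{\ \cong\ }\Ext[2]{\env RS}SN=\hcm[2]RSN$, realised by Yoneda composition with $\partial$. A standard computation of $\Tor[\bullet]RSS$ gives $\HH 0 J=0$ and $\HH 1 J\cong\Tor[1]RSS\cong I/I^2$, so the truncation argument above applies verbatim to yield $\Ext[1]{\env RS}JN\cong\Hom S{I/I^2}N$. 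Thus $\hcm[2]RSN\cong\Hom S{I/I^2}N$ too.

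\emph{Reduction to one map.} The composite $\atiyahclass\vf\circ\mu$ lies in $\Hom{\dcat{\env RS}}{\env RS}{\susp\cotan\vf}=\HH 0{\susp\cotan\vf}=0$, so $\atiyahclass\vf$ factors through $\partial$: one has $\atiyahclass\vf=\susp\eta\circ\partial$ for a morphism $\eta\colon J\to\cotan\vf$, and this $\eta$ is unique since $\Hom{\dcat{\env RS}}{\env RS}{\cotan\vf}=\HH 0{\cotan\vf}=0$. Unwinding the Yoneda composition in \eqref{eq:atiyahdef} against this factorisation, and matching it with the connecting isomorphism of the previous paragraph, shows that under the identifications above the Atiyah character $\atiyah[1]\vf N$ becomes, up to sign, precomposition with $\HH 1\eta\colon I/I^2\to I/I^2$. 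The lemma is therefore equivalent to the assertion that $\HH 1\eta$ is an isomorphism.

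\emph{The main obstacle.} What remains, and what I expect to be the crux, is to pin down $\HH 1\eta$. For this I would return to Illusie's definition of $\atiyahclass\vf$ as the connecting morphism of the principal-parts sequence associated to a resolution of $\vf$, and verify on a low-degree (Tate) model that, once $\HH 1 J$ and $\HH 1{\cotan\vf}$ are both identified with $\Tor[1]RSS\cong I/I^2$ in the canonical way, the induced map $\HH 1\eta$ is the identity up to sign. This chain-level reconciliation of the principal-parts connecting map with the standard isomorphism $\Tor[1]RSS\cong I/I^2$ is the only nonformal step; granting it, $\HH 1\eta$ is invertible and hence so is $\atiyah[1]\vf N$.
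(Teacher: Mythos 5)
Your formal skeleton is sound: the identifications of source and target with $\Hom S{I/I^2}N$, the vanishing arguments giving existence and uniqueness of the factorisation $\eta\colon J\to \cotan{\vf}$ with $\atiyahclass\vf = \susp\eta\circ\partial$, and the reduction of the lemma to the assertion that $\HH 1\eta$ is an isomorphism are all correct. But the step you defer --- ``pin down $\HH 1\eta$'' --- is not a routine chain-level reconciliation of identifications; it is the entire content of the lemma, and your proposal does not prove it. Nothing in your reduction distinguishes $\atiyahclass\vf$ from the zero morphism, whose factorisation is $\eta=0$; so this last step must use genuine information about the Atiyah class, and that information is exactly what is missing.

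Here is where it lives, and why your proposed verification would miss it. Following Illusie's definition (this is how the paper argues), take a simplicial resolution $R\to P\xra{\sim}S$ and set $J=\Ker(S\otimes_RP\to S)$; then $\cotan{\vf}=J/J^2$ and $\atiyahclass\vf$ is the connecting morphism of the principal-parts sequence $0\to J/J^2\to (S\otimes_RP)/J^2\to S\to 0$. The projection $J\to J/J^2$ induces a map from the fiber sequence of $\mu$ to this sequence which is the identity on $S$, so by your uniqueness argument $\eta$ \emph{is} that projection. From the exact sequence $0\to J^2\to J\to J/J^2\to 0$, the bijectivity of $\HH 1\eta$ then follows from the vanishing $\HH i{J^2}=0$ for $i\le 1$. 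That vanishing is a theorem of Quillen \cite{Quillen:1968}, valid because $\vf$ is surjective, and it is precisely the input the paper uses (there, to kill $\Ext[i]{\env RS}{(S\otimes_RP)/J^2}N$ for $i\le 2$). The subtlety is simplicial: $J^2$ is the degreewise square, so for instance $(J^2)_1=(J_1)^2$ is nonzero in general, and its connectivity is a real theorem. On a dg (Tate) model the analogous statement is trivial --- the augmentation ideal sits in degrees $\ge 1$, so its square vanishes in degrees $\le 1$ automatically --- but such a model computes $\mathrm{L^{dg}}(\vf)$, not $\cotan{\vf}$, and the two differ outside characteristic zero (see \ref{ch:dgcomparison}); upgrading a Tate-model check to the simplicial cotangent complex would require comparing the two Atiyah classes and knowing $\lambda$ is an isomorphism in low degrees, none of which is in your sketch. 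To close the gap, prove or cite Quillen's vanishing $\HH i{J^2}=0$ for $i\le 1$; with it, your argument (or the paper's more direct one, which never needs to introduce $\eta$) goes through.
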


\begin{proof}
Consider the construction of the cotangent complex from \ref{ch:cotangent}. It is shown in \cite{Quillen:1968}*{\S 6} that since $\vf$ is surjective, $\HH i {J^2}=0$ for $i\le 1$, and it follows from this that $\Ext[i]{\env RS}{(S\otimes_R P)/J^2}{N}=0$ for $i\le 2$. Hence the exact sequence arising from the triangle (\ref{eq:principalseq}) degenerates into an isomorphism
\[
0\lra \Ext[1]{\env RS}{\cotan{\vf}}{N}\xrightarrow{\dual{(\atiyahclass \vf)}} \Ext[2]{\env RS}{S}{N}\lra 0\,.
\]
The last ingredient is to observe that in (\ref{eq:atiyahdef}) the map $\mu_1$ is also an isomorphism:
\[
\Ext[1]{S}{\cotan{\vf}}{N} = \Hom S {\HH 1{\cotan{\vf}}}{N} =  \Hom {\env RS} {\HH 1{\cotan{\vf}}}{N} = \Ext[1] {\env RS} {\cotan{\vf}}{N}\,.
\]
Since $\atiyah {\vf}N = \dual{(\atiyahclass \vf)}\circ\mu_* $ this completes the proof.
\end{proof}
\end{chunk}

\section{Local complete intersection maps}
\label{se:lci}
This section is entirely about surjective maps of local rings. It contains a proof of Theorem~\ref{intro:thmb} from the introduction. The argument uses Theorem~\ref{th:atiyah-equivariance} and a characterisation of the complete intersection property for a local map in terms of nilpotent elements in its homotopy Lie algebra.

\begin{chunk} 
Let $R$ be a noetherian local ring, $\vf\colon R\to S$ a surjective map, $\ell$ the residue field of $S$, and $F\colonequals \ell\lotimes RS$.  Avramov~\cite{Avramov:1983}*{\S3} attaches to $\vf$ a graded Lie algebra, called the \emph{homotopy Lie algebra} of $\vf$, and denoted $\pi(\vf)$; see also \cite {Avramov/Halperin:1983}. It is a $\ell$-vector subspace of $\Ext F{\ell}{\ell}$ consisting of primitive elements, closed under the commutator, which endows it with the structure of a Lie algebra. We view $\pi(\vf)$ as a subspace of the primitives of $\hcm RS{\ell}$, using \eqref{eq:hh=ext}. 
\end{chunk}

\begin{chunk}
\label{ch:aha}
Let $\vf$ be as above and suppose that $\pdim_RS$ is finite. The contrapositive of \cite{Avramov/Halperin:1987}*{Theorem~C} due to Avramov and Halperin states: If for each $\alpha$ in $\pi^2(\vf)$ and $\beta\in \pi(\vf)$ there exists an integer $s\ge 1$ such that $(\ad \alpha)^s(\beta) = 0$, then $\vf$ is complete intersection.  That is, the kernel of $\vf$ is generated by a regular sequence.
\end{chunk}

This is one of the key ingredients in the proof of the result below. The statement involves the characteristic action of $\hcm RS{\ell}$-action on $\Ext S{\cotan{\vf}}\ell$; see \ref{ch:Yoneda}.

\begin{theorem}
\label{th:main-local-torsion}
Let $R$ be a noetherian  local ring,  $\vf\colon R\to S$  a surjective map, and $\ell$ the residue field of $S$.  If $\pdim_RS$ is finite, and there exists an integer $n$ such that $\hcm[\ges n]RS\ell\cdot  \Ext [1]S{\cotan{\vf}}\ell=0$, then $\vf$ is complete intersection.
\end{theorem}

\begin{proof}
By Theorem~\ref{th:atiyah-equivariance} the Atiyah character $\atiyah{\vf}\ell$ is equivariant with respect to the action of $\hcm RS\ell$. The  hypotheses and Lemma~\ref{le:at2} therefore yield 
\begin{equation}
\label{eq:main-local}
\ad(x)(\alpha) =0 \quad \text{for all $x\in \hcm[\ges n]RS\ell$ and $\alpha\in\hcm[2]RS\ell$\,.}
\end{equation}
We claim that this puts us in a context where the result recalled in~\ref{ch:aha} applies. To see this we exploit the fact that $\pi(\vf)$  consists of primitive elements in $\hcm RS{\ell}$. Thus for any $\alpha\in \pi^2(\vf)$ and $\beta\in \pi^{\ges n}(\vf)$ one gets
\[
\ad(\alpha)(\beta) = [\alpha,\beta] = -[\beta,\alpha] = -\ad(\beta)(\alpha)  = 0\,,
\]
where the first and last equality hold because $\alpha$ and $\beta$ are primitive elements, see \ref{ch:classical-adjoint}; the second one holds since $\alpha$ has degree 2 and the last one holds by \eqref{eq:main-local}. Then for any $s\ge \lceil n/2 \rceil$ and $\gamma\in\pi(\vf)$ one gets that
\[
\ad (\alpha)^{s}(\gamma) = \ad(\alpha)( \ad(\alpha)^{s-1}(\gamma)) =0
\]
because the degree of $\ad (\alpha)^{s-1}(\gamma)$ is $2(s-1) + |\gamma|\ge 2s\ge n$. Thus the result  in \ref{ch:aha} applies to yield that $\vf$ is complete intersection.
\end{proof}

\begin{chunk}
\label{ch:main-local}
Let $\vf\colon R\to S$ be a surjective local map with kernel $I$. Given \eqref{eq:cotan01}, in $\dcat S$ there is a natural morphism 
\[
\cotan{\vf}\lra \susp I/I^2\,.
\]
When $\vf$ is complete intersection, $\pdim_RS$ is finite, the $S$-module $I/I^2$ is free, and the map above is a quasi-isomorphism. Theorem~\ref{th:main-local}  gives a strong converse.

With $\fm$ denoting the maximal ideal of $R$,  one has  $I^2\subseteq \fm I$. Composing the map above with the  surjection $I/I^2\to I/\fm I$ yields a morphism
\begin{equation}
\label{eq:eps-map}
\eps\colon \cotan{\vf} \lra  \susp I/\fm I
\end{equation}
in $\dcat S$. The result below is about this map.

\begin{theorem}
\label{th:main-local}
Let $(R,\fm)$ be a noetherian  local ring and $\vf\colon R\to S$  a surjective map with $\pdim_RS$ finite. If the morphism $\eps\colon \cotan{\vf} \to  \susp I/\fm I$ factors in $\dcat S$  through a perfect complex, then $\vf$ is complete intersection.
\end{theorem}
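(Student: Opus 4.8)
The plan is to deduce the result from Proposition~\ref{pr:main-local}, by producing an integer $n$ for which
\[
\hcm[\ges n]{R}{S}{\ell}\cdot \Ext[1]{S}{\cotan{\vf}}{\ell}=0\,,
\]
where $\ell$ is the residue field of $S$ and the product is the characteristic action of \ref{ch:Yoneda}. By hypothesis there is a perfect complex $P$ and a factorization $\cotan{\vf}\xra{\,a\,} P\xra{\,b\,} \susp I/\fm I$ in $\dcat{S}$ with $\eps=b\circ a$, where $\eps$ is the map of \eqref{eq:eps-map}. The mechanism of the proof is that on a perfect complex the characteristic action of $\hcm{R}{S}{\ell}$ is eventually zero, and this vanishing is transported back to $\cotan{\vf}$ along $a$.

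First I would check that $\eps$ induces an isomorphism on $\Ext[1]{S}{-}{\ell}$. As computed in the proof of Lemma~\ref{le:at2}, one has $\Ext[1]{S}{\cotan{\vf}}{\ell}=\Hom{S}{\HH 1{\cotan{\vf}}}{\ell}=\Hom{S}{I/I^2}{\ell}$, using \eqref{eq:cotan01}; likewise $\Ext[1]{S}{\susp I/\fm I}{\ell}=\Hom{S}{I/\fm I}{\ell}$. On $\HH 1$ the map $\eps$ is the canonical surjection $I/I^2\to I/\fm I$, and since every $S$-linear map into $\ell$ factors through $-\otimes_S\ell$ while $(I/I^2)\otimes_S\ell=I/\fm I$, applying $\Hom{S}{-}{\ell}$ turns this surjection into an isomorphism. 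Thus $\eps^{*}$ is bijective in degree one. As $\eps^{*}=a^{*}\circ b^{*}$, it follows that
\[
\Ext[1]{S}{\cotan{\vf}}{\ell}\subseteq a^{*}\!\left(\Ext[1]{S}{P}{\ell}\right)\,,
\]
the image of the degree-one part of $a^{*}\colon\Ext{S}{P}{\ell}\to\Ext{S}{\cotan{\vf}}{\ell}$.

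Now I would combine two observations. The chain-level recipe of \ref{ch:Yoneda} defines the characteristic action of $\hcm{R}{S}{\ell}$ on $\Ext{S}{X}{\ell}$ for any $X\in\dcat{S}$, and it is natural in $X$, because precomposing a cocycle $X\to\susp^{j}\ell$ with (a lift of) $a$ only alters the module slot and so commutes with the operation in the $\wt S$-slot; hence $a^{*}(x\cdot\eta)=x\cdot a^{*}(\eta)$, that is, $a^{*}$ is equivariant. On the other hand, since $P$ is perfect the graded vector space $\Ext{S}{P}{\ell}$ is bounded; fix $N$ with $\Ext[i]{S}{P}{\ell}=0$ for all $i>N$. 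The characteristic action raises cohomological degree, so for every $x\in\hcm[\ges N]{R}{S}{\ell}$ and $\eta\in\Ext[1]{S}{P}{\ell}$ one has $x\cdot\eta\in\Ext[\ges N+1]{S}{P}{\ell}=0$. Together with the previous paragraph,
\[
\hcm[\ges N]{R}{S}{\ell}\cdot \Ext[1]{S}{\cotan{\vf}}{\ell}\subseteq a^{*}\!\left(\hcm[\ges N]{R}{S}{\ell}\cdot\Ext[1]{S}{P}{\ell}\right)=0\,.
\]
Taking $n=N$, Proposition~\ref{pr:main-local} applies and shows that $\vf$ is complete intersection.

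The step I expect to require the most care is the equivariance of $a^{*}$: one must confirm that the characteristic action of \ref{ch:Yoneda}, defined there for a module, extends to objects of $\dcat{S}$ and is compatible with morphisms, so that the action genuinely passes through $a^{*}$. The remaining ingredients---the degree-one identification of $\eps^{*}$, the boundedness of $\Ext{S}{P}{\ell}$ for perfect $P$, and the degree bookkeeping---are routine.
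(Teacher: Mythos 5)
Your proposal is correct and takes essentially the same route as the paper: factor $\Ext S{\eps}{\ell}$ through $\Ext SP{\ell}$, verify that $\Ext[1]S{\eps}{\ell}$ is an isomorphism (so the map out of $\Ext[1]SP{\ell}$ is surjective in degree one), use perfectness of $P$ to kill the characteristic action of $\hcm[\ges n]RS{\ell}$ on $\Ext[1]SP{\ell}$, and transport this vanishing by equivariance so that Proposition~\ref{pr:main-local} applies. The two points you flag and then check --- the identification of $\eps^{*}$ in degree one via $\Hom S{I/I^2}{\ell}\cong\Hom S{I/\fm I}{\ell}$, and the naturality of the characteristic action in the module argument --- are precisely the steps the paper's proof asserts without detail.
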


\begin{proof}
By hypothesis $\eps$ factors as a composition of  morphisms  $\cotan{\vf} \to P \to  \susp  I/\fm I$ in $\dcat S$, where $P$ is a perfect complex. With $\ell$ denoting the residue field of $S$, the map $\Ext S{\eps}{\ell}$ factors as
\[
\begin{tikzcd}
\Ext S{ I/\fm I}\ell \ar[r]  &  \Ext SP\ell \ar[r, "\nu"] & \Ext S{\cotan{\vf}}\ell\,.
\end{tikzcd}
\]
These maps are compatible with the characteristic action of $\hcm RS{\ell}$.  Using $\HH i{\cotan{\vf}}=0$ for $i\le 0$ it is easy to verify that $\Ext[1]S{\eps}\ell$ is an isomorphism, and so $\nu^{1}$ is surjective. As $P$ is perfect, $\Ext[\ges n+1]SP\ell=0$ for some integer $n$, and so
\[
\hcm[\ges n]RS\ell \cdot \Ext[1]SP\ell = 0\,.
\]
Hence the linearity of the maps with respect to the action of $\hcm RS\ell$ and the surjectivity of $\nu^{1}$ imply 
\[
\hcm[\ges n]RS\ell\cdot \Ext[1]S{\cotan{\vf}}\ell=0\,.
\]
Thus Theorem~\ref{th:main-local} applies and yields that $\vf$ is complete intersection.
\end{proof}

\end{chunk}

\begin{chunk}
Let $\vf\colon R\to S$ be a surjective local map with $\pdim_RS$ finite, and set $I\colonequals \Ker(\vf)$. If the second symbolic power $I^{(2)}$ satisfies $I^{(2)}\subseteq \fm I$, then Theorem~\ref{th:main-local} applies to say that if the $S$-module $I/I^{(2)}$ has finite projective dimension, then $I$ is generated by a regular sequence, in which case $I/I^{(2)}$ is free; confer \cite{Smith:2000}.
\end{chunk}

We end this section by sketching  differential graded analogues of the above results. These remarks will not be needed in the sequel, but they may help to put the preceding discussion in context. 

\begin{chunk}
\label{ch:hla}\label{ch:dgcomparison}
Let $\vf\colon R\to S$ be a surjective local map, and $\ell$ the residue field of $S$.  Using a minimal semi-free dg algebra resolution of  $\vf$ in place of a simplicial resolution, one can define a dg version $\mathrm{L^{dg}}(\vf)$ of the cotangent complex;  \cites{Avramov/Herzog:1994,Briggs:2020a}. The dg Atiyah class $\mathrm{dgAt}^\vf\colon S\to \susp \mathrm{L^{dg}}(\vf)$ can be defined analogously and from it the dg Atiyah character. Lemma~\ref{le:at2} and Theorem~\ref{th:atiyah-equivariance} hold for $\mathrm{L^{dg}}(\vf)$.  There is a natural comparison map
\[
\lambda \colon \mathrm{L^{dg}}(\vf)\to \cotan{\vf}\,.
\]
When $R$ contains a field of characteristic zero $\lambda$ is a quasi-isomorphism; otherwise $\cotan{\vf}$ and $\mathrm{L^{dg}}(\vf)$ are typically rather different; see \cite{Avramov:1998}*{pp.~106}.

In any case, the global Atiyah class $\atiyahclass\vf$ factors through ${\rm dgAt}^\vf$ via $\lambda$, so that the Atiyah character $\atiyah{\vf}{\ell}$ factors as
 \[
 \Ext S{\cotan{\vf}}{\ell}\xra{\ \Ext S{\lambda}{\ell}\ } \Ext S{\mathrm{L^{dg}}(\vf)}{\ell} \lra  \hcm RS{\ell}
 \]
where the maps are compatible with the actions of $\hcm RS{\ell}$.  It turns out that the map on the right is bijective onto $\pi(\vf)$. We plan to this elaborate on this, and its connection to Lie algebras of local rings as introduced by Andr\'e~\cite{Andre:1970}, elsewhere. 
\end{chunk}

\section{Cotangent modules}
\label{se:cotangent-modules}
In this section we prove Theorem~\ref{intro:thma}, and the other global results on locally complete intersection maps, announced in the Introduction.

\begin{chunk}
\label{ch:lcidef}
In their full generality, locally complete intersection maps were defined by Avramov in \cite{Avramov:1999}. We start with a local homomorphism $\vf\colon R\to S$. By \cite{Avramov/Foxby/Herzog:1994}*{1.1}, the composition of $\vf$ with the completion map $S\to \wh S$ at the maximal ideal of $S$ admits a \emph{regular factorisation}:
\begin{equation}\label{eqn:factorisation}
R\xra{\ \dot\vf\,} R'\xra{\ \vf'}{\wh S}\,,
\end{equation}
where $\dot\vf$ is a flat local map whose closed fibre is regular, and $\vf'$ is surjective.  One says that $\vf$ is \emph{complete intersection} if in some, equivalently, any, regular factorisation (\ref{eqn:factorisation}) the kernel of $\vf'$ is generated by a regular sequence; see~\cite{Avramov:1999}*{3.3}.

A map $\vf\colon R\to S$ of noetherian rings is \emph{locally complete intersection} if for each $\fp$ in $\spec S$ the localisation $\vf_\fp\colon R_{\fp\cap R}\to S_\fp$ is complete intersection.

When $\vf$ is essentially of finite type the definition is already in \cite{Grothendieck:1964}.  In this context there is no need to complete $S$ in (\ref{eqn:factorisation}), and  one may take $\dot\vf$ to be smooth.
\end{chunk}

\begin{chunk}
\label{ch:syz}
Let $S$ be a ring and $M$ a complex $S$-modules such that $\HH iM=0$ for $i\ll 0$. Let $P\xra{\sim} M$ be a projective resolution of $M$; thus $P$ is a complex of projective $S$-modules with $P_i=0$ for $i\ll 0$.  If $Q$ is another such resolution of $M$, then for each integer $n$, there is an quasi-isomorphism
\[
P_{\ges n} \oplus \susp^n Q' \xra{\ \sim\ } Q_{\ges n} \oplus \susp^n P'
\]
where $P'$ and $Q'$ are projective $S$-modules; see \cite{Avramov/Iyengar:2007}*{1.2}. In particular, one has an isomorphism of  $S$-modules
\[
\HH n{P_{\ges n}} \oplus Q' \cong  \HH n{Q_{\ges n}} \oplus P'\,.
\]
When $M$ is a module, the $S$-module $\HH n{P_{\ges n}}$ is its $n$th syzygy module, and the discussion above says nothing more than that a syzygy module is well-defined, up to projective summands.
\end{chunk}

\begin{chunk}\label{ch:cotandef}
Let $\vf\colon R\to S$ be a map of noetherian rings and $\cotan{\vf}$  its cotangent complex. Choose a projective resolution $P$ of $\cotan{\vf}$ and for each integer $n$ set 
\[
\cotam n{\vf}\colonequals \HH n{P_{\ges n}}\,.
\]
We call this $S$-module the \emph{$n$th cotangent module} of $\vf$; by \ref{ch:syz} it is independent of $P$, up to projective summands. In particular, the  flat dimension, or the projective dimension, of $\cotam n{\vf}$ is well-defined, and an invariant only of $\vf$. Evidently $\cotam 0{\vf}\cong \Omega_{S/R}$, and in the case of a surjective homomorphism, when $S=R/I$ one has
\[
\cotam 0{\vf} = 0 \quad\text{and}\quad \cotam 1{\vf}\cong I/I^2\,.
\]
Therefore the cotangent modules can be considered as higher analogues of both the module of differentials and the conormal module. These objects were introduced by Avramov and Herzog \cite{Avramov/Herzog:1994}; in their context $R$ is a field of characteristic zero and $S$ is positively graded.

For  $\fp$ in $\spec S$ there is a quasi-isomorphism ${\cotan \vf}_\fp\simeq \cotan {\vf_\fp}$ by \cite{Andre:1974}*{IV.59~\&~V.27}. It follows that  ${\cotam n{\vf}}_\fp\cong \cotam n{\vf_\fp}$ up to projective $S_\fp$-module summands.

\end{chunk}

\begin{lemma}
\label{le:cotam-fdim}
Let $R\xra{\dot\vf} R'\xra{\vf'}S$ be maps of noetherian rings such that $\dot\vf$ is smooth. Then for any integer $n\ge 1$ the flat dimension of $\cotam n{\vf'\dot\vf}$ is equal to that of $\cotam n{\vf'}$. 
\end{lemma}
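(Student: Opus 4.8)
The plan is to exploit the transitivity (Jacobi--Zariski) exact triangle of the cotangent complex for the composite $R\xra{\ \dot\vf\ }R'\xra{\ \vf'\ }S$. In $\dcat S$ this reads
\[
S\lotimes{R'}\cotan{\dot\vf}\lra \cotan{\vf'\dot\vf}\lra \cotan{\vf'}\lra \susp\left(S\lotimes{R'}\cotan{\dot\vf}\right)\,,
\]
see \cite{Andre:1974} or \cite{Quillen:1968}. The crucial input is that $\dot\vf$ is smooth: then $\cotan{\dot\vf}$ is quasi-isomorphic to the module of K\"ahler differentials $\Omega_{R'/R}$ placed in homological degree $0$, and $\Omega_{R'/R}$ is a finitely generated projective $R'$-module. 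Consequently $\Omega\colonequals S\lotimes{R'}\cotan{\dot\vf}\simeq S\otimes_{R'}\Omega_{R'/R}$ is a finitely generated projective $S$-module, concentrated in degree $0$. Thus the triangle exhibits $\cotan{\vf'\dot\vf}$ as differing from $\cotan{\vf'}$ only by a projective module sitting in degree $0$.

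First I would fix a projective resolution $Q\xra{\ \sim\ }\cotan{\vf'}$ with $Q_i=0$ for $i<0$, which is possible because $\HH i{\cotan{\vf'}}=0$ for $i<0$. Since $\Omega$ is projective and concentrated in degree $0$, the connecting morphism $\cotan{\vf'}\to\susp\Omega$ can be represented by an honest chain map $g\colon Q\to\susp\Omega$; concretely $g$ is a single homomorphism $Q_1\to\Omega$ that vanishes on the image of $Q_2$. I would then form the homotopy fibre $P\colonequals\operatorname{fib}(g)$, a bounded-below complex of projective $S$-modules with $P_i=Q_i$ for $i\ge 1$ and $P_0=Q_0\oplus\Omega$. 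The triangle identifies $P\simeq\cotan{\vf'\dot\vf}$, so $P$ is a projective resolution of $\cotan{\vf'\dot\vf}$.

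The payoff is then immediate from the description of the cotangent modules in \ref{ch:cotandef}: for every $n\ge 1$ one has $P_{\ge n}=Q_{\ge n}$, because $P$ and $Q$ coincide in all degrees $\ge 1$. Therefore
\[
\cotam n{\vf'\dot\vf}=\HH n{P_{\ge n}}=\HH n{Q_{\ge n}}=\cotam n{\vf'}
\]
for these choices of resolutions, and in particular $\fdim_S\cotam n{\vf'\dot\vf}=\fdim_S\cotam n{\vf'}$ for all $n\ge 1$. As the cotangent modules are independent of the chosen resolution up to projective summands (\ref{ch:syz}), and flat dimension is unaffected by such summands, the equality of flat dimensions is unconditional.

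The main obstacle I anticipate is the bookkeeping in the middle step: one must verify that the connecting map of the transitivity triangle can indeed be realized as the chain map $g$ above, and that the resulting fibre $P$ is genuinely a projective resolution of $\cotan{\vf'\dot\vf}$ with the stated degreewise description. Once this is in place, the agreement $P_{\ge n}=Q_{\ge n}$ for $n\ge 1$, and hence the conclusion, is formal. Everything hinges on the projectivity of $\Omega$ forcing the discrepancy between the two cotangent complexes into degree $0$ alone, which is precisely the range excluded by the hypothesis $n\ge 1$.
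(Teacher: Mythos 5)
Your proof is correct. It rests on the same two pillars as the paper's argument --- the Jacobi--Zariski triangle for $R\to R'\to S$ and the homological triviality of $\cotan{\dot\vf}$ for smooth $\dot\vf$ --- but the chain-level execution is genuinely different, and in one respect stronger. The paper resolves the two \emph{outer} terms of the triangle: it takes projective resolutions $Q$ of $\cotan{\dot\vf}$ over $R'$ and $P$ of $\cotan{\vf'\dot\vf}$ over $S$, represents the first map of the triangle by a chain map $S\otimes_{R'}Q\to P$, uses its cone as the resolution of $\cotan{\vf'}$, and truncates; taking homology yields the exact sequence
\[
\HH{n}{S\lotimes{R'} \cotan{\dot\vf}}\lra \cotam n{\vf'\dot\vf} \lra \cotam n{\vf'}\lra S\otimes_{R'} \cotam {n-1}{\dot\vf}\lra 0\,,
\]
whose outer terms are, respectively, zero and flat; the only property of smoothness invoked there is that $\cotan{\dot\vf}$ has \emph{flat dimension zero}. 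You instead resolve the \emph{third} term $\cotan{\vf'}$ and take the homotopy fibre of a chain-level representative $g\colon Q\to\susp\Omega$ of the connecting map, which forces all of the discrepancy into degree $0$; for this you need the stronger (equally standard) fact that $\cotan{\dot\vf}$ is a finitely generated \emph{projective module concentrated in degree zero}, both so that $\operatorname{fib}(g)$ is again a complex of projectives and so that $g$ is concentrated in a single degree. One small correction: the existence of $g$ as an honest chain map is due to $Q$ being a bounded-below complex of projectives (hence K-projective), not to the projectivity of $\Omega$; and the identification $\operatorname{fib}(g)\simeq\cotan{\vf'\dot\vf}$ follows from the fill-in axiom together with the five lemma applied to the two triangles over $g$. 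What your route buys: the two resolutions literally agree in degrees $\ge 1$, so $\cotam n{\vf'\dot\vf}$ and $\cotam n{\vf'}$ coincide up to projective summands --- which gives equality of projective dimensions as well, and an identification in the singularity category --- with no long exact sequence needed. What the paper's route buys: since it uses only flat dimension zero of $\cotan{\dot\vf}$, it applies verbatim to any factorization whose first map has that weaker property.
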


\begin{proof}
Setting $\vf\colonequals \vf'\dot\vf$ yields the Jacobi-Zariski exact triangle 
\[
S\lotimes{R'} \cotan{\dot\vf} \lra \cotan{\vf} \lra \cotan{\vf'} \lra
\]
in $\dcat S$; see \cite{Quillen:1968}*{5.1}. Let $Q$ be a projective resolution of $ \cotan{\dot\vf}$, and $P$ be a projective resolution of $\cotan{\vf}$. Then $S\lotimes{R'} \cotan{\dot\vf} \lra \cotan{\vf}$ may be represented by a map of complexes $S\otimes_{R'}Q\to P$, and we may take $P'=\cone(S\otimes_{R'}Q\to P)$ as a projective resolution of $\cotan{\vf'}$. Having done this the truncated resolutions fit into a triangle
\[
S\otimes_{R'} Q_{\ges n-1}\lra P_{\ges n}\lra P'_{\ges n}\lra\,.
\]
Taking homology we obtain an exact sequence
\[
\HH{n}{S\lotimes{R'} \cotan{\dot\vf}}\lra \cotam n{\vf} \lra \cotam n{\vf'}\lra S\otimes_{R'} \cotam {n-1}{\dot\vf}\lra 0 \,.
\]
Since $\dot\vf$ is smooth, $\cotan{\dot\vf}$ has flat dimension $0$ by \cite{Andre:1974}*{XVI.17}, and hence the same is true of the $S$-complex $S\lotimes{R'} \cotan{\dot\vf}$. In particular $\HH{n}{S\lotimes{R'} \cotan{\dot\vf}}=0$ and the module $S\otimes_{R'}  \cotam {n-1}{\dot\vf}$ is flat, so the exact sequence is witness to the equality $\fdim_S \cotam n{\vf}=\fdim_S \cotam n{\vf'}$.
\end{proof}

When $\vf$ is locally complete intersection, the flat dimension of $\cotan{\vf}$ is at most one, and the $S$-modules $\cotam{n}{\vf}$ are flat for $n\ge 1$. The result below provides a strong converse, at least for maps essentially of finite type.  It generalises \cite{Avramov/Herzog:1994}*{Theorem~2.4} due to Avramov and Herzog, and answers a problem they pose in \cite{Avramov/Herzog:1994}*{\S 3}.

\begin{theorem}
\label{th:cotam}
Let  $\vf\colon R\to S$ be a map of  noetherian rings, essentially of finite type and locally of finite flat dimension. If for some integer $n\ge 1$ the $S$-module $\cotam n{\vf}$ has finite flat dimension, then $\vf$ is locally complete intersection.
\end{theorem}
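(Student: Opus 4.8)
The plan is to localize, reduce to a surjective local map, and then invoke Theorem~\ref{th:main-local}.

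Being locally complete intersection is tested at the primes of $S$, so I would fix $\fp\in\spec S$, put $\fq\colonequals\fp\cap R$, and aim to prove that $\vf_\fp\colon R_\fq\to S_\fp$ is complete intersection. This localized map is again essentially of finite type and of finite flat dimension, and by the last lines of~\ref{ch:cotandef} one has ${\cotam{n}{\vf}}_\fp\cong\cotam{n}{\vf_\fp}$ up to projective summands; hence $\cotam{n}{\vf_\fp}$ has finite flat dimension. So I may assume that $\vf$ is local. Choosing a regular factorization $R\xra{\dot\vf}R'\xra{\vf'}S$ as in~\ref{ch:lcidef}, with $\dot\vf$ smooth and $\vf'$ surjective, the map $\vf$ is complete intersection exactly when $\vf'$ is, and Lemma~\ref{le:cotam-fdim} gives $\fdim_S\cotam{n}{\vf'}=\fdim_S\cotam{n}{\vf}<\infty$.

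Before applying Theorem~\ref{th:main-local} to $\vf'$ I would verify that $\pdim_{R'}S$ is finite. Let $\ell$ be the residue field of $R$ and $\ell'$ that of $R'$, which is also the residue field of $S$. Flatness of $\dot\vf$ yields $\Tor[i]{R'}{S}{R'\otimes_R\ell}\cong\Tor[i]{R}{S}{\ell}$, and this vanishes for $i>\fdim_RS$. The closed fibre $R'/\fm R'$ of the smooth map $\dot\vf$ is regular, so $\ell'$ has finite projective dimension over it; combining these two facts by associativity of derived tensor products gives $\Tor[i]{R'}{S}{\ell'}=0$ for $i\gg0$. As $S$ is a cyclic $R'$-module, $\pdim_{R'}S=\fdim_{R'}S<\infty$.

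Now $\vf'$ is a surjective local map with $\pdim_{R'}S$ finite, and the finitely generated $S$-module $\cotam{n}{\vf'}$ has finite flat --- equivalently finite projective --- dimension. By Theorem~\ref{th:main-local} it is enough to show that the map $\eps\colon\cotan{\vf'}\to I'/\fm'I'$, with $I'\colonequals\Ker(\vf')$, factors in $\dcat S$ through a perfect complex. When $n=1$ this is clear: $\cotam{1}{\vf'}=I'/I'^2$ has finite projective dimension, so the canonical map $\cotan{\vf'}\to\susp(I'/I'^2)$ onto the lowest homology has perfect target, and $\eps$ is obtained from it by composing with the surjection $\susp(I'/I'^2)\to I'/\fm'I'$.

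The main obstacle is the case $n\ge2$, where the hypothesis bears on $\cotam{n}{\vf'}=\HH{n}{P_{\ges n}}$, sitting in homological degree $n$, whereas $\eps$ is detected in degree one. Working with a minimal free resolution $P\xra{\sim}\cotan{\vf'}$ and the triangle $P_{\le n-1}\to P\to P_{\ges n}$ of brutal truncations, the finite projective dimension of $\cotam{n}{\vf'}$ --- the lowest homology of $P_{\ges n}$ --- makes $\susp^{n}\cotam{n}{\vf'}$ a perfect complex receiving a map from $\cotan{\vf'}$; but this records degree-$n$ data and does not by itself factor the degree-one map $\eps$. I expect the cleanest route is to bypass the explicit factorization and instead establish the hypothesis of Proposition~\ref{pr:main-local} directly, that $\hcm[\ges N]{R'}{S}{\ell'}\cdot\Ext[1]{S}{\cotan{\vf'}}{\ell'}=0$ for some $N$, by exploiting the finite projective dimension of $\cotam{n}{\vf'}$ to bound the relevant $\Ext$ modules in high degrees along $P_{\ges n}$ and feeding this through the equivariance of the characteristic action supplied by Theorem~\ref{th:atiyah-equivariance}. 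Reconciling the degree-$n$ finiteness with the degree-one vanishing is the crux, and it is here that the rigidity of the cotangent complex, constraining its higher Andr\'e--Quillen homology, has to be used.
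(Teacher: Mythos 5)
Your reduction to a surjective local map $\vf'$ with $\pdim_{R'}S$ finite is correct and matches the paper, as does your treatment of $n=1$; but for $n\ge 2$ your proposal has a genuine gap, and the idea you are missing is an elementary one. Take a resolution $P\xra{\sim}\cotan{\vf'}$ by finitely generated projectives with $P_i=0$ for $i\le 0$ (possible because the homology of $\cotan{\vf'}$ is finitely generated and vanishes in degrees $\le 0$), so that $\cotam n{\vf'}=\coker(P_{n+1}\to P_n)$. Instead of the brutal truncation $P_{\ges n}$, which as you observe records only degree-$n$ information, pass to the \emph{quotient} complex
\[
Q\colonequals \bigl(0\lra \cotam n{\vf'}\lra P_{n-1}\lra \cdots \lra P_1\lra 0\bigr)\,,
\]
that is, the quotient of $P$ by the subcomplex equal to $P_i$ in degrees $i\ge n+1$ and to the image of $P_{n+1}\to P_n$ in degree $n$. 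The chain map $P\to \susp (I'/I'^2)$ is concentrated in degree one and kills that subcomplex (in degree one the subcomplex is zero when $n\ge 2$, and is contained in the boundaries when $n=1$), so it factors through $Q$. Since $\cotam n{\vf'}$ is finitely generated of finite flat, hence finite projective, dimension, and each $P_i$ is finitely generated projective, $Q$ is a perfect complex. Composing with the surjection $I'/I'^2\to I'/\fm' I'$ shows that $\eps$ factors through a perfect complex in $\dcat S$, and Theorem~\ref{th:main-local} applies --- uniformly for all $n\ge 1$, with no case distinction.

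Your fallback suggestion, to verify the hypothesis $\hcm[\ges N]{R'}{S}{\ell'}\cdot\Ext[1]{S}{\cotan{\vf'}}{\ell'}=0$ of Proposition~\ref{pr:main-local} directly, is not an argument as it stands: you give no mechanism for converting finiteness of $\pdim_S\cotam n{\vf'}$ into that vanishing, and the only mechanism the paper provides is precisely the factorization through a perfect complex (this is how Theorem~\ref{th:main-local} is itself deduced from the Proposition, via the vanishing of $\Ext[\ges N-1]SP{\ell'}$ for perfect $P$). So the degree-$n$ versus degree-one tension you identify as ``the crux'' is resolved not by any new rigidity input, but by the observation that truncating a projective resolution from above by its cokernel does not disturb the low degrees through which $\eps$ is defined.
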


\begin{proof}
By \ref{ch:cotandef} we may assume $\vf$ is local. Since $\vf$ is essentially of finite type it can be factored as $R\xra{\dot\vf} R'\xra{\vf'}S$, with $\dot\vf$ smooth and with $\vf'$ surjective; see \ref{ch:lcidef}. Lemma~\ref{le:cotam-fdim} yields that $\cotam n{\vf'}$ has finite flat dimension. It follows from the fact that $\vf$ has finite flat dimension and $\dot\vf$ is smooth that $\vf'$ has finite flat dimension as well; see, for instance, \cite{Avramov/Foxby/Herzog:1994}*{Lemma~3.2}. Hence we may replace $\vf$ with $\vf'$, and assume that it is surjective. 

The homology modules $\HH i{\cotan{\vf}}$ are finitely generated and equal to $0$ for $i\le 0$. Thus there exists a resolution $P\xra{\sim}\cotan{\vf}$ consisting of finitely generated projective modules, with $P_i=0$ for $i\le 0$. In particular, we can assume $\cotam n{\vf}$ is finitely generated, so the hypothesis gives that its projective dimension is finite. Moreover $\HH 1P=I/I^2$, where $I=\Ker(\vf)$, so  there is a natural morphism of complexes $P \to \susp I/I^2$. Observe that this factors through the quotient complex
\[
0\lra \cotam n{\vf} \lra P_{n-1} \lra \cdots \lra  P_1\to 0\,.
\]
Since the projective dimension of $\cotam n{\vf}$ is finite, and each $P_i$ is projective, the complex above is perfect. Thus in $\dcat S$ the  morphism $\cotan {\vf}\to \susp I/I^2$ factors through a perfect complex, and hence the same is true of the morphism 
\[
\cotan{\vf}\lra \susp (I/\fm I)
\]
where $\fm$ is the maximal ideal of $R$; see \ref{ch:main-local}. We can now apply Theorem~\ref{th:main-local} to conclude that $\vf$ is complete intersection.
\end{proof}

In Theorem \ref{th:cotam}, when $S=R/I$, the case $n=1$ says that $I$ is a locally complete intersection ideal as soon as the conormal module $\cotam 1 \vf=I/I^2$ has finite projective dimension. This was a conjecture of Vasconcelos \cite{Vasconcelos:1978}, resolved by the first author in \cite{Briggs:2020a}. Even in this special case, the proof given here is new. We will see soon that Theorem~\ref{th:cotam} also yields a new proof of a theorem of Avramov \cite{Avramov:1999}.

What happens in the case $n=0$ is still unclear. Supposing that $R$ is a field of characteristic zero, Vasconcelos conjectured that if $\cotam 0 \vf = \Omega_{S/R}$ has finite projective dimension over $S$, then $S$ is a locally complete intersection ring \cite{Vasconcelos:1978}. In general this remains open, but see \cite{Avramov/Herzog:1994}*{2.7} and \cite{Briggs:2020a}*{\S3.4}.

\begin{chunk}\label{ch:twoflat}
The flatness of the cotangent modules can be detected through the \emph{Andr\'e-Quillen homology functors}, namely 
\[
\aqh nRSM \colonequals \HH{n}{ \cotan{\vf}\lotimes SM}\,.
\]

Indeed, we can take a projective resolution $P$ of $\cotan{\vf}$, and consider the natural map $\cotam n{\vf}\to P_{n-1}$.  For an $S$-module $M$, the hypothesis $\aqh nRSM =0$ means exactly that the induced map
\[
\cotam n{\vf}\otimes_SM \lra P_{n-1}\otimes_SM
\]
is injective. In particular, if $\aqh nRS- =0$  on the category of $S$-modules,  then $\cotam n{\vf}$ is a \emph{pure submodule} of $P_{n-1}$.  Because of the exact sequence
\[
0\lra\cotam n{\vf}\lra P_{n-1}\lra \cotam {n-1}{\vf}\lra 0\,,
\]
the flatness of $P_{n-1}$ would then be inherited by both  $\cotam n{\vf}$ and $\cotam {n-1}{\vf}$.
\end{chunk}

The next result implies immediately Quillen's conjecture \cite{Quillen:1968}*{5.6} that $\cotan \vf$ can only have finite flat dimension if $\vf$ is locally complete intersection, as proven originally by Avramov \cite{Avramov:1999}*{Theorems 1.4, 1.5}.  It also answers, in the affirmative, a question posed by him~\cite{Avramov:1984}*{Remark~2}.

\begin{corollary}
\label{ch:Quillen}
Let $\vf\colon R\to S$ be a map of noetherian  rings, locally of finite flat dimension. If $\aqh nRS-=0$ for an $n\ge 1$, then $\vf$ is locally complete intersection.
\end{corollary}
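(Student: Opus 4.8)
The plan is to combine the flatness criterion of \ref{ch:twoflat} with Theorem~\ref{th:cotam}, the only genuine gap being that the latter assumes $\vf$ essentially of finite type. First I would record the input from \ref{ch:twoflat}: the vanishing $\aqh nRS-=0$ exhibits $\cotam n\vf$ as a pure submodule of the projective module $P_{n-1}$, hence $\cotam n\vf$ is flat, and in particular of finite flat dimension. Since the complete intersection property is tested prime by prime (\ref{ch:lcidef}) and, by \ref{ch:cotandef}, both the cotangent modules and the hypothesis $\aqh nRS-=0$ localise, I may assume from the outset that $\vf\colon (R,\fm)\to S$ is local and that $\fdim_RS<\infty$.

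The heart of the matter is to reduce this local, but possibly not finite-type, map to a surjective local map to which Theorem~\ref{th:main-local} applies, imitating the proof of Theorem~\ref{th:cotam}. I would complete $S$ at its maximal ideal, write $\wh\vf\colon R\to\wh S$ for the composite, and use the Avramov--Foxby--Herzog factorisation recalled in \ref{ch:lcidef} to write $R\xra{\dot\vf}R'\xra{\psi}\wh S$ with $\dot\vf$ flat with regular closed fibre and $\psi$ surjective; by definition $\vf$ is complete intersection exactly when $\psi$ is. As $S\to\wh S$ is flat, flat base change for the cotangent complex gives $\cotan{\wh\vf}\simeq\cotan\vf\lotimes S\wh S$, so $\cotam n{\wh\vf}\cong\cotam n\vf\otimes_S\wh S$ is flat over $\wh S$ and $\fdim_R\wh S<\infty$. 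The decisive step is the analogue of Lemma~\ref{le:cotam-fdim} with the flat map $\dot\vf$ replacing a smooth one: feeding the Jacobi--Zariski triangle
\[
\wh S\lotimes{R'}\cotan{\dot\vf}\lra\cotan{\wh\vf}\lra\cotan\psi\lra
\]
into the syzygy bookkeeping of \ref{ch:syz} yields $\fdim_{\wh S}\cotam n{\wh\vf}=\fdim_{\wh S}\cotam n\psi$, provided $\cotan{\dot\vf}$ has finite flat dimension. Thus $\cotam n\psi$ has finite flat dimension; since $\psi$ is surjective, $\wh S$ is a finitely generated $R'$-module, so $\cotam n\psi$ is finitely generated over the noetherian local ring $R'$ and therefore of finite projective dimension. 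The same passage through $\dot\vf$ makes $\pdim_{R'}\wh S$ finite.

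At this stage $\psi\colon R'\to\wh S$ is a surjective local map of finite projective dimension whose $n$th cotangent module has finite projective dimension, and I would run verbatim the endgame of the proof of Theorem~\ref{th:cotam}: truncating a finite free resolution of $\cotan\psi$ shows that the natural map $\cotan\psi\to\susp I/I^2$, with $I\colonequals\Ker\psi$, factors through a perfect complex, hence so does $\cotan\psi\to\susp(I/\fm'I)$ for $\fm'$ the maximal ideal of $R'$, as in \ref{ch:main-local}. Theorem~\ref{th:main-local} then gives that $\psi$, and therefore $\vf$, is complete intersection.

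I expect the main obstacle to be precisely this non-finite-type reduction, concentrated in the claim that $\cotan{\dot\vf}$ has finite flat dimension for a flat local map with regular closed fibre. The way I would settle it is to test on the closed fibre: flat base change identifies $\cotan{\dot\vf}\lotimes{R'}(R'/\fm R')$ with the cotangent complex of the regular local ring $R'/\fm R'$ over the field $R/\fm$, which has finite flat dimension because a regular local ring is a complete intersection and the cotangent complex of such a quotient is concentrated in degrees $0$ and $1$ with free homology in degree one; a Nakayama-type descent along the flat map $\dot\vf$ then propagates finiteness to $\cotan{\dot\vf}$ itself. Carefully tracking finiteness of flat dimension through the completion and the factorisation, until one lands on a surjection from a noetherian local ring where the relevant modules are finitely generated, is the only delicate bookkeeping; everything else is a faithful transcription of the essentially-of-finite-type argument already carried out for Theorem~\ref{th:cotam}.
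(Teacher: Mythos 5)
Your overall route --- localise, take a regular factorisation $R\xra{\ \dot\vf\ }R'\to\wh S$ as in \ref{ch:lcidef}, reduce to the surjective map, and finish with flatness of a cotangent module via Theorem~\ref{th:cotam} (equivalently Theorem~\ref{th:main-local}) --- is the same as the paper's. The difference lies in how the hypothesis is carried across the factorisation. The paper does not attempt this by hand: it invokes Avramov's transfer theorems, \cite[1.1]{Avramov:1999} for $n=1$ and \cite[1.7]{Avramov:1999} for $n\ge 2$, which state precisely that $\aqh nRS-=0$ forces $\aqh n{R'}{\wh S}-=0$, and only then applies \ref{ch:twoflat} to the resulting surjection. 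Your proposal replaces this citation with two ad hoc claims, and both have genuine gaps; they are exactly the difficulties those results of Avramov were designed to overcome.

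First, the assertion that ``flat base change gives $\cotan{\wh\vf}\simeq\cotan{\vf}\lotimes S\wh S$'' is a misapplication: flat base change computes the cotangent complex of a base change $S\otimes_RR'$ over $R'$, not of a composition with the flat map $S\to\wh S$. What one actually has is the Jacobi--Zariski triangle $\wh S\lotimes S\cotan{\vf}\to\cotan{\wh\vf}\to\cotan{\wh S/S}\to$, and the third term cannot simply be discarded: its positive-degree homology is governed by geometric regularity of the formal fibres of $S$, which holds for excellent rings but can fail for an arbitrary noetherian local ring. Second, in a regular factorisation the closed fibre $R'/\fm R'$ is regular but in general \emph{not} geometrically regular over $R/\fm$ (the residue field extension may be inseparable), and regularity alone does not control $\cotan{\dot\vf}$: already a flat local map of fields $k\to K$ has regular closed fibre, yet $\aqh 1kK-\ne 0$ whenever $K$ is inseparable over $k$. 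So the proof of Lemma~\ref{le:cotam-fdim}, which rests on $\cotan{\dot\vf}$ having flat dimension \emph{zero} by \cite[XVI.17]{Andre:1974}, does not carry over; and even granting $\fdim\cotan{\dot\vf}<\infty$, the four-term exact sequence in that proof no longer yields an equality of flat dimensions, because the outer terms $\HH n{\wh S\lotimes{R'}\cotan{\dot\vf}}$ and $\wh S\otimes_{R'}\cotam{n-1}{\dot\vf}$ are then neither zero nor flat. The suggested ``Nakayama-type descent'' of finiteness from the closed fibre is not a valid argument for an unbounded complex of non-finitely-generated modules over a non-finite-type algebra. Note also that the order of operations matters: Avramov's results transfer the vanishing of the \emph{functor} $\aqh nRS-$, not finiteness of flat dimension of a single module $\cotam n{\vf}$, which is why the paper moves the hypothesis across the factorisation first and only afterwards invokes \ref{ch:twoflat}. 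A self-contained proof along your lines would amount to reproving a substantial part of Section~1 of \cite{Avramov:1999}.
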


\begin{proof}
We may assume that $\vf$ is local by flat base change, and then we may take a regular factorisation of $R\to \wh S$ as in (\ref{eqn:factorisation}). It follows from \cite{Avramov:1999}*{1.1} for $n=1$, and from \cite{Avramov:1999}*{1.7} for $n\geq 2$, that if $\aqh nRS-=0$ then as well $\aqh n{R'}{\wh S}-=0$. Hence we may assume $\vf$ is surjective.

By the observation in \ref{ch:twoflat}, the vanishing of $\aqh nRS-$ implies that both $\cotam n{\vf}$ and $\cotam {n-1}{\vf}$ are flat. So $\vf$ is complete intersection by Theorem~\ref{th:cotam}.
\end{proof}

We note that Theorem~\ref{th:cotam} is stronger than Corollary~\ref{ch:Quillen}: The hypotheses of the later yield that \emph{two} consecutive cotangent modules are flat, while the former theorem only requires a single cotangent module to have finite flat dimension.

\begin{chunk}
The analogue of Corollary~\ref{ch:Quillen} for the homotopy Lie algebra says that for a surjective local homomorphism $\vf\colon R\to S$ of finite flat dimension, if $\pi^n(\vf)=0$ for some $n\geq 2$ then $\vf$ is complete intersection. This was proven by  Halperin in the case $R$ is regular \cite{Halperin:1987}, and by Avramov in general \cite{Avramov:1999}. The arguments of this paper can also be adapted to prove this result; indeed, the analogue of Theorem~\ref{th:cotam} works equally well using syzygies of the dg cotangent complex; see \ref{ch:dgcomparison}.
\end{chunk}

\begin{chunk}
Corollary~\ref{ch:Quillen} is true as well for the Andr\'e-Quillen cohomology functors, since $\aqch nRS-=0$ implies $\aqh nRS-=0$. In fact, the vanishing of $\aqch nRS-$ implies projectivity of the higher cotangent modules $\cotam n{\vf}$ and $\cotam {n-1}{\vf}$, which is a stronger condition than their flatness.
\end{chunk}

\begin{chunk}\label{ch:geometry}
The results above can be restated geometrically. Let $f\colon X\to Y$ be a morphism of locally noetherian schemes. Illusie \cite{Illusie:1971} associates to $f$ a cotangent complex $\cotan f$ in the derived category of quasi-coherent sheaves on $X$, from which we obtain the cotangent homology functors 
\[
\aqh n YX- \colonequals \mathrm{H}^{-n}(\cotan f\lotimes X -)\,.
\]
By \cite{Illusie:1971}*{3.1.1} the vanishing of the cotangent homology functors can be detected on an open affine cover of $X$. Hence, it follows immediately from Corollary~\ref{ch:Quillen} that if $f$ is  locally of finite flat dimension and $\aqh n YX- =0$ for some $n\geq 1$ then $f$ is locally complete intersection.

Now assume further that $f$ is essentially of finite type, and that $X$ has enough vector bundles (locally free sheaves); see \cite{Orlov:2004}*{1.2}. The cotangent complex $\cotan f$ can then be represented as a bounded below complex $P$ of vector bundles. We define the $n$th cotangent sheaf of $f$ to be
\[
\cotam n f\colonequals \coker(\partial\colon P_{n+1}\to P_{n})\,.
\]
In this context we cannot assert that $\cotam n f$ is well-defined up to vector bundle summands. Instead we consider $\cotam n f$ as an object in the \emph{singularity category} $\dsing X$ of Buchweitz \cite{Buchweitz:1986} and Orlov \cite{Orlov:2004}. Indeed, if $P\to P'$ is a quasi-isomorphism to another bounded below complex of vector bundles representing $\cotan f$, then the induced map on the relevant cokernels  has a perfect cone by \ref{ch:cotandef}, and is therefore an isomorphism in $\dsing X$. Finally, applying Theorem~\ref{th:cotam} locally yields the following statement:

\begin{theorem}
If $f\colon X\to Y$ is a morphism of locally noetherian schemes, essentially of finite type and locally of finite flat dimension, and if $\cotam n f \simeq 0$ in $\dsing X$ for some $n\geq 1$ then $f$ is locally complete intersection. \qed
\end{theorem}

\end{chunk}

\begin{bibdiv}
\begin{biblist}

\bib{Andre:1970}{article}{
   author={Andr\'{e}, M.},
   title={L'alg\`ebre de Lie d'un anneau local},
   language={French},
   conference={
      title={Symposia Mathematica, Vol. IV},
      address={INDAM, Rome},
      date={1968/69},
   },
   book={
      publisher={Academic Press, London},
   },
   date={1970},
   pages={337--375},
   review={\MR{0276302}},
}

\bib{Andre:1974}{book}{
   author={Andr\'{e}, Michel},
   title={Homologie des alg\`ebres commutatives},
   language={French},
   note={Die Grundlehren der mathematischen Wissenschaften, Band 206},
   publisher={Springer-Verlag, Berlin-New York},
   date={1974},
   pages={xv+341},
   review={\MR{0352220}},
}

\bib{Avramov:1984}{article}{
   author={Avramov, Luchezar L.},
   title={Local rings of finite simplicial dimension},
   journal={Bull. Amer. Math. Soc. (N.S.)},
   volume={10},
   date={1984},
   number={2},
   pages={289--291},
   issn={0273-0979},
   review={\MR{733698}},
   doi={10.1090/S0273-0979-1984-15250-9},
}

\bib{Avramov:1984a}{article}{
   author={Avramov, Luchezar L.},
   title={Local algebra and rational homotopy},
   journal={Ast\'{e}risque},
   volume={113},
   date={1984},
   pages={15--43},
   review={\MR{749041}},
}

\bib{Avramov:1983}{article}{
   author={Avramov, Luchezar L.},
   title={Golod homomorphisms},
   conference={
      title={Algebra, algebraic topology and their interactions},
      address={Stockholm},
      date={1983},
   },
   book={
      series={Lecture Notes in Math.},
      volume={1183},
      publisher={Springer, Berlin},
   },
   date={1986},
   pages={59--78},
   review={\MR{846439}},
   doi={10.1007/BFb0075450},
}

\bib{Avramov:1998}{article}{
   author={Avramov, Luchezar L.},
   title={Infinite free resolutions},
   conference={
      title={Six lectures on commutative algebra},
      address={Bellaterra},
      date={1996},
   },
   book={
      series={Progr. Math.},
      volume={166},
      publisher={Birkh\"{a}user, Basel},
   },
   date={1998},
   pages={1--118},
   review={\MR{1648664}},
}
\bib{Avramov:1999}{article}{
   author={Avramov, Luchezar L.},
   title={Locally complete intersection homomorphisms and a conjecture of
   Quillen on the vanishing of cotangent homology},
   journal={Ann. of Math. (2)},
   volume={150},
   date={1999},
   number={2},
   pages={455--487},
   issn={0003-486X},
   review={\MR{1726700}},
   doi={10.2307/121087},
}

\bib{Avramov/Foxby/Herzog:1994}{article}{
   author={Avramov, Luchezar L.},
   author={Foxby, Hans-Bj\o rn},
   author={Herzog, Bernd},
   title={Structure of local homomorphisms},
   journal={J. Algebra},
   volume={164},
   date={1994},
   number={1},
   pages={124--145},
   issn={0021-8693},
   review={\MR{1268330}},
   doi={10.1006/jabr.1994.1057},
}

\bib{Avramov/Halperin:1983}{article}{
   author={Avramov, Luchezar},
   author={Halperin, Stephen},
   title={Through the looking glass: a dictionary between rational homotopy
   theory and local algebra},
   conference={
      title={Algebra, algebraic topology and their interactions},
      address={Stockholm},
      date={1983},
   },
   book={
      series={Lecture Notes in Math.},
      volume={1183},
      publisher={Springer, Berlin},
   },
   date={1986},
   pages={1--27},
   review={\MR{846435}},
   doi={10.1007/BFb0075446},
}
\bib{Avramov/Halperin:1987}{article}{
   author={Avramov, Luchezar},
   author={Halperin, Stephen},
   title={On the nonvanishing of cotangent cohomology},
   journal={Comment. Math. Helv.},
   volume={62},
   date={1987},
   number={2},
   pages={169--184},
   issn={0010-2571},
   review={\MR{896094}},
   doi={10.1007/BF02564444},
}

\bib{Avramov/Herzog:1994}{article}{
   author={Avramov, Luchezar L.},
   author={Herzog, J\"{u}rgen},
   title={Jacobian criteria for complete intersections. The graded case},
   journal={Invent. Math.},
   volume={117},
   date={1994},
   number={1},
   pages={75--88},
   issn={0020-9910},
   review={\MR{1269426}},
   doi={10.1007/BF01232235},
}

\bib{Avramov/Iyengar:2007}{article}{
   author={Avramov, Luchezar L.},
   author={Iyengar, Srikanth B.},
   title={Constructing modules with prescribed cohomological support},
   journal={Illinois J. Math.},
   volume={51},
   date={2007},
   number={1},
   pages={1--20},
   issn={0019-2082},
   review={\MR{2346182}},
}

\bib{Briggs:2020a}{article}{
   author={Briggs, Benjamin},
   title={Vasconcelos' conjecture on the conormal module},
   journal={Invent. Math.},
   volume={227},
   date={2022},
   number={1},
   pages={415--428},
   issn={0020-9910},
   review={\MR{4359479}},
   doi={10.1007/s00222-021-01070-0},
}
		
\bib{Buchweitz:1986}{article}{
author= {Buchweitz, R.-O.},
book={
title = {Maximal Cohen-Macaulay modules and Tate-cohomology},
series={Mathematical Surveys and Monographs 262},
publisher={American Mathematical Society, Providence, RI},
date={2021}},
}

\bib{Buchweitz/Flenner:2008}{article}{
   author={Buchweitz, Ragnar-Olaf},
   author={Flenner, Hubert},
   title={The global decomposition theorem for Hochschild (co-)homology of
   singular spaces via the Atiyah-Chern character},
   journal={Adv. Math.},
   volume={217},
   date={2008},
   number={1},
   pages={243--281},
   issn={0001-8708},
   review={\MR{2357327}},
   doi={10.1016/j.aim.2007.06.013},
}

\bib{Buchweitz/Roberts:2015}{article}{
   author={Buchweitz, Ragnar-Olaf},
   author={Roberts, Collin},
   title={The multiplicative structure on Hochschild cohomology of a
   complete intersection},
   journal={J. Pure Appl. Algebra},
   volume={219},
   date={2015},
   number={3},
   pages={402--428},
   issn={0022-4049},
   review={\MR{3279363}},
   doi={10.1016/j.jpaa.2014.05.002},
}
\bib{Eisenbud/Mazur:1997}{article}{
   author={Eisenbud, David},
   author={Mazur, Barry},
   title={Evolutions, symbolic squares, and {F}itting ideals},
   journal={J. Reine Angew. Math.},
   volume={488},
   date={1997},
   pages={189--201},
   issn={0075-4102},
   review={\MR{1465370}},
}

\bib{Felix/Halperin/Thomas:2001}{book}{
   author={F\'{e}lix, Yves},
   author={Halperin, Stephen},
   author={Thomas, Jean-Claude},
   title={Rational homotopy theory},
   series={Graduate Texts in Mathematics},
   volume={205},
   publisher={Springer-Verlag, New York},
   date={2001},
   pages={xxxiv+535},
   isbn={0-387-95068-0},
   review={\MR{1802847}},
   doi={10.1007/978-1-4613-0105-9},
}

\bib{Grothendieck:1964}{article}{
   author={Grothendieck, A.},
   title={\'{E}l\'{e}ments de g\'{e}om\'{e}trie alg\'{e}brique. IV. \'{E}tude locale des sch\'{e}mas et
   des morphismes de sch\'{e}mas. I},
   language={French},
   journal={Inst. Hautes \'{E}tudes Sci. Publ. Math.},
   number={20},
   date={1964},
   pages={259},
   issn={0073-8301},
   review={\MR{173675}},
}

\bib{Smith:2000}{article}{
   author={Smith, Daniel A.},
   title={A variation on a theme of Vasconcelos},
   journal={J. Algebra},
   volume={225},
   date={2000},
   number={1},
   pages={381--390},
   issn={0021-8693},
   review={\MR{1743666}},
   doi={10.1006/jabr.1999.8134},
}
\bib{Gulliksen/Levin:1969}{book}{
   author={Gulliksen, Tor H.},
   author={Levin, Gerson},
   title={Homology of local rings},
   series={Queen's Papers in Pure and Applied Mathematics, No. 20},
   publisher={Queen's University, Kingston, Ont.},
   date={1969},
   pages={x+192},
   review={\MR{0262227}},
}

\bib{Halperin:1987}{article}{
   author={Halperin, Stephen},
   title={The nonvanishing of the deviations of a local ring},
   journal={Comment. Math. Helv.},
   volume={62},
   date={1987},
   number={4},
   pages={646--653},
   issn={0010-2571},
   review={\MR{920063}},
   doi={10.1007/BF02564468},
}

\bib{Illusie:1971}{book}{
   author={Illusie, Luc},
   title={Complexe cotangent et d\'{e}formations. I},
   language={French},
   series={Lecture Notes in Mathematics, Vol. 239},
   publisher={Springer-Verlag, Berlin-New York},
   date={1971},
   pages={xv+355},
   review={\MR{0491680}},
}

\bib{Milnor:2010}{collection}{
   author={Milnor, John},
   title={Collected papers of John Milnor. V. Algebra},
   note={Edited by Hyman Bass and T. Y. Lam},
   publisher={American Mathematical Society, Providence, RI},
   date={2010},
   pages={xii+425},
   isbn={978-0-8218-4876-0},
   review={\MR{2841244}},
}
\bib{Milnor/Moore:1965}{article}{
   author={Milnor, John W.},
   author={Moore, John C.},
   title={On the structure of Hopf algebras},
   journal={Ann. of Math. (2)},
   volume={81},
   date={1965},
   pages={211--264},
   issn={0003-486X},
   review={\MR{174052}},
   doi={10.2307/1970615},
}

\bib{Orlov:2004}{article}{
   author={Orlov, D. O.},
   title={Triangulated categories of singularities and D-branes in
   Landau-Ginzburg models},
   language={Russian, with Russian summary},
   journal={Tr. Mat. Inst. Steklova},
   volume={246},
   date={2004},
   number={Algebr. Geom. Metody, Svyazi i Prilozh.},
   pages={240--262},
   issn={0371-9685},
   translation={
      journal={Proc. Steklov Inst. Math.},
      date={2004},
      number={3(246)},
      pages={227--248},
      issn={0081-5438},
   },
   review={\MR{2101296}},
}

\bib{Quillen:1968}{article}{
   author={Quillen, Daniel},
   title={On the (co-)homology of commutative rings},
   conference={
      title={Applications of Categorical Algebra},
      address={Proc. Sympos. Pure Math., Vol. XVII, New York},
      date={1968},
   },
   book={
      publisher={Amer. Math. Soc., Providence, R.I.},
   },
   date={1970},
   pages={65--87},
   review={\MR{0257068}},
}

\bib{Serre:1993}{article}{
   author={Serre, Jean-Pierre},
   title={G\`ebres},
   language={French},
   journal={Enseign. Math. (2)},
   volume={39},
   date={1993},
   number={1-2},
   pages={33--85},
   issn={0013-8584},
   review={\MR{1225256}},
}

\bib{Vasconcelos:1978}{article}{
   author={Vasconcelos, W. V.},
   title={On the homology of $I/I\sp{2}$},
   journal={Comm. Algebra},
   volume={6},
   date={1978},
   number={17},
   pages={1801--1809},
   issn={0092-7872},
   review={\MR{508082}},
   doi={10.1080/00927877808822322},
}
\end{biblist}
\end{bibdiv}

\end{document}